\colorlet{shadecolor}{gray!15}
\numberwithin{equation}{section}
\theoremstyle{plain}
\newtheorem{proposition}{Proposition} 
\newtheorem{corollary}{Corollary} 
\theoremstyle{definition}
\newtheorem{definition}{Definition}
\declaretheorem[shaded={rulecolor=black, rulewidth=0.5pt, bgcolor=white},name=Lemma]{lembox}
\declaretheorem[shaded={rulecolor=black, rulewidth=0.5pt, bgcolor=white},name=Theorem]{thmbox}
\newcommand{\ex}{\mathbb{E}}
\newcommand{\inp}[2]{\left\langle#1, #2 \right\rangle}
\newcommand{\norm}[1]{\left\lVert#1\right\rVert}
\newcommand{\Om}[1]{\Omega\left( #1\right)}
\newcommand{\OO}[1]{O\left( #1\right)}
\newcommand{\oo}[1]{o\left( #1\right)}
\newcommand{\re}{\mathbb{R}}
\newcommand{\eps}{\epsilon}
\newcommand{\xs}{x^*}
\newcommand{\g}{\nabla f}
\newcommand{\ee}{\eta}
\newcommand{\mm}{\mu} 
\newcommand{\ccc}{A}
\newcommand{\co}{A_1}
\newcommand{\cbo}{A_2}
\newcommand{\cbt}{A_3}
\newcommand{\ini}{k_0}
\newcommand{\ab}{\xi}
\begin{document}
	\title{On Tight Convergence Rates of Without-replacement SGD}
	 \author{Kwangjun Ahn} 
 	\author{Suvrit Sra}
 \affil{\small{Department of Electrical Engineering and Computer Science\\ Massachusetts Institute of Technology \\ \texttt{\{kjahn,suvrit\}@mit.edu}}}
 
	\maketitle 
	\begin{abstract}
	  For solving finite-sum optimization problems, SGD without replacement sampling is empirically shown to outperform SGD. 
	  Denoting by  $n$ the number of components in the  cost and $K$ the number of epochs of the algorithm, several recent works have shown convergence rates  of without-replacement SGD that have better dependency on $n$ and $K$ than the baseline rate of $O(1/(nK))$ for SGD.
	  However, there are two main limitations shared among those works: the rates have extra poly-logarithmic factors on $nK$, and denoting by $\kappa$  the condition number of the problem, the rates hold after  $\kappa^c\log(nK)$ epochs for some $c>0$.
	 In this work, we overcome these limitations by analyzing  step sizes that vary across epochs.
	\end{abstract}
\section{Introduction} \label{intro}
Stochastic gradient descent (SGD) is a popular optimization method  for  cost functions of the form:
\begin{align} \label{def:F}
    F(x):=\frac{1}{n}\sum_{i=1}^n f_i(x)~~\text{for $x\in\re^d$}\,.
\end{align}
More specifically, instead of computing the full gradient $\nabla F(x)$ at each iteration, SGD computes a cheaper approximation of it by computing $\nabla f_i(x)$  where index $i$ is randomly sampled from $\{1,2,\dots,n\}$.
In sampling such an index at each iteration, one can naturally think of the following two options: (i) with-replacement and (ii) without-replacement samplings.
Interestingly, without-replacement SGD (also known as random reshuffle or random shuffle) has been empirically shown to outperform with-replacement SGD~\cite{bottou2009curiously,bottou2012stochastic}.
However, the traditional analysis of SGD only covers the with-replacement version, raising the question whether one can capture this phenomenon \emph{theoretically}.

The  first contribution is established by G{\"u}rb{\"u}zbalaban, Ozdaglar, and Parrilo~\cite{gurbuzbalaban2015random} for the case where $F$ is $\mu$-strongly convex and each $f_i$ is quadratic and $L$-smooth.
Under this setting, they prove an asymptotic\footnote{Their proof is based on an asymptotic version of Chung's Lemma~\cite[Lemma 4.2]{fabian1967stochastic}.} convergence rate of $\OO{\nicefrac{1}{K^2}}$ for $K$ epochs (the number of entire passes through the component indices $\{1,2,\dots ,n\}$) when $n$ is treated as a constant.
This is indeed an asymptotic improvement over the convergence rate of $\OO{\nicefrac{1}{nK}}$ achieved by  with-replacement SGD.
However, in modern applications, $n$ cannot be treated as a constant, and it is important to characterize \emph{non-asymptotic} behavior of the algorithm. 
For instance, in machine learning applications, $n$ is typically equal to the number of data in a training set, and the algorithm is typically run a few epochs and terminates.  
Hence, it is  important to investigate the dependence on $n$ as well as non-asymptotic convergence rates.

Scrutinizing the analysis of~\cite[(58)]{gurbuzbalaban2015random}, one can actually deduce the asymptotic convergence rate of $\OO{\nicefrac{1}{(nK)^2}}+o(\nicefrac{1}{K^2})$, yet the exact dependence on $n$ and making this analysis non-asymptotic\footnote{Note that the key ingredient for making the analysis in \cite{gurbuzbalaban2015random} non-asymptotic is a non-asymptotic version of Chung's lemma~\cite[Lemma 1]{chung1954stochastic}, but as pointed out by Fabian~\cite[Discussion above Lemma 4.2]{fabian1967stochastic} the original proof of the result has some errors.} remain open.

\begin{table}
\centering
\begin{tabular}{ |c|c|c|  }
\hline
 \multicolumn{3}{|c|}{Sum of quadratics} \\
 \hline 
  \hline  
 Upper bound in~\cite{gurbuzbalaban2015random}&  $\OO{\frac{1}{(nK)^2}} +\oo{\frac{1}{K^2}}$ &asymptotic result
 \\
 Upper bound in~\cite{haochen2018random} & $\OO{\frac{\log^3(nK)}{(nK)^2} + \frac{\log^4(nK)}{K^3}}$& requires $K\geq \kappa \log (nK)$ \\
  Upper bound in~\cite{rajput2020closing} & $\OO{\frac{\log^2(nK)}{(nK)^2} + \frac{\log^3(nK)}{nK^3}}$& requires $K\geq \kappa^2 \log (nK)$\\
   Lower bound in~\cite{safran2019good} & $\Om{\frac{1}{(nK)^2} + \frac{1}{nK^3}}$&  constant step size\\
  \hline 
  Our upper bound & $\OO{\frac{1}{(nK)^2} + \frac{1}{nK^3}}$& all   $K\geq 1$\\
 \hline
\end{tabular}

\vspace*{0.3 cm}

\begin{tabular}{ |c|c|c|}
\hline
 \multicolumn{3}{|c|}{Non-quadratic strongly convex} \\
 \hline 
  \hline  
 Upper bound in~\cite{nagaraj2019sgd}&$\OO{\frac{\log^2(nK)}{nK^2}}$&requires $K\geq \kappa^2 \log (nK)$  
 \\
 Lower bound in~\cite{rajput2020closing} & $\Om{\frac{1}{nK^2}}$ & constant step size\\
 \hline
  Our upper bound & $\OO{\frac{1}{nK^2}}$&   all   $K\geq 1$\\
  \hline
\end{tabular}

\caption{A summary of comparisons between known results and our main results.
For both cases, $F$ is assumed to be $\mu$-strongly convex, and each $f_i$ is assumed to be convex and $L$-smooth.
For the sum of quadratics case each $f_i$ is additionally assumed to be quadratic. 
Here $K$ is the number of epochs (entire passes through the component indices $\{1,2,\dots ,n\}$), and $\kappa$ is the condition number of the problem. }
\label{table}
\end{table}

Recently, a few subsequent efforts have been made to characterize non-asymptotic convergence rates in terms of both $n$ and $K$.
Haochen and Sra~\cite{haochen2018random} develop the first \emph{non-asymptotic} convergence of $\OO{\nicefrac{\log^3(nK)}{(nK)^2} +\nicefrac{\log^4(nK)}{K^3}}$ under the condition $K \geq \Omega(\kappa \log (nK))$  for the same setting as \cite{gurbuzbalaban2015random} where $\kappa:=\nicefrac{L}{\mu}$ denotes the condition number.
However, compared with the baseline rate of SGD, i.e., $\OO{\nicefrac{1}{nK}}$, this convergence rate becomes an improvement only after $\omega(\sqrt{n})$ epochs.
This result is strengthened in  follow-up works that  demonstrate the superiority of without-replacement SGD:
\begin{enumerate}
    \item The work by Nagaraj, Jain, and Netrapalli~\cite{nagaraj2019sgd} considers a  more general setting where $f_i$'s no longer have to be quadratic, but just convex and smooth.
    Under this setting, they introduce clever coupling arguments to prove the non-asymptotic convergence rate of $\OO{\nicefrac{\log^2(nK)}{nK^2}}$ under the condition of $K \geq \Omega(\kappa^2 \log (nK))$ which is more stringent than that of \cite{haochen2018random}.
    Note that this rate is better than the baseline  of $\OO{\nicefrac{1}{nK}}$ as soon as the technical condition $K\geq \Omega(\kappa^2 \log (nK))$ is fulfilled.
  This upper bound is shown to be asymptotically tight up to poly-logarithmic factors by  Rajput, Gupta, and Papailiopoulos~\cite{rajput2020closing}. See Table~\ref{table} for comparisons.
  
    \item The convergence rate with a better dependency on $n$ for the strongly convex setting given by~\cite{nagaraj2019sgd} (at the cost of a severer requirement on $K$) has motivated researchers to revisit the quadratic sum case of \cite{gurbuzbalaban2015random} and obtain a convergence rate that has better dependency on $n$ than that of \cite{haochen2018random}.
    The first set of results in this direction are given by Safran and Shamir~\cite{safran2019good}, where an asymptotic lower bound of $\Om{\nicefrac{1}{(nK)^2} +\nicefrac{1}{nK^3}}$ is developed.
    They also establish a matching  upper bound for the $1$-dimensional case up to poly-logarithmic factors, evidencing that their lower bound is likely to have the correct dependency on $n$.
    The question of correct dependency  is settled  by  Rajput, Gupta, and Papailiopoulos~\cite{rajput2020closing} where the non-asymptotic convergence rate of $\OO{\nicefrac{\log^2(nK)}{(nK)^2} +\nicefrac{\log^3(nK)}{nK^3}}$ under the condition $K\geq \Omega(\kappa^2 \log (nK))$ is established  building on the coupling arguments in \cite{nagaraj2019sgd}. See Table~\ref{table} for comparisons.
\end{enumerate}

Despite such noticeable development over the years, there are key limitations shared among the existing non-asymptotic results (see Section~\ref{sec:limit} for precise details):
\begin{itemize}
    \item All the results place requirements\footnote{One notable exception is \cite[Theorem 2]{nagaraj2019sgd}. However, the result only proves the rate of $\OO{\nicefrac{\log(nK)}{nK}}$ which is not an improvement over the baseline rate of $\OO{\nicefrac{1}{nK}}$ for with-replacement SGD. } on the  number of epochs of the form $K \geq \kappa^{\alpha} \log (nK)$ for some constant $\alpha\geq 1$.
    \item All the non-asymptotic results have extra poly-logarithmic terms, raising a question whether one can remove them with improved analysis. 
\end{itemize}
In this paper, we aim to overcome these limitations.

\subsection{Summary of our main results} \label{sec:sum}
We establish tight convergence rates for without-replacement SGD (see Table~\ref{table}).
The key to obtaining tight convergence rates is to consider iteration-dependent step sizes $\OO{\nicefrac{1}{i}}$ for $i=1,\dots, n$, during the first epoch and the constant step size $\OO{\nicefrac{1}{nk}}$ for the $k$-th epoch ($k\geq 2$).
Our analysis builds on the per-iteration/epoch progress bounds developed in the prior arts~\cite{nagaraj2019sgd,rajput2020closing} (see Section~\ref{sec:per}).
The main distinction lies in turning those progress bounds into global convergence rate bounds.
One such tool for obtaining a non-asymptotic convergence rate is a  version of Chung's lemma~\cite[Lemma 1]{chung1954stochastic}, developed in the stochastic approximation literature.
Unfortunately, the original proof has some errors as pointed out by Fabian~\cite[Discussion above Lemma 4.2]{fabian1967stochastic}, and even assuming its correctness, it turns out that the lemma is not sufficient for obtaining the desired convergence rate bound (see Section~\ref{sec:fail}).
To overcome this difficulty, in Section~\ref{sec:tight}, we introduce a variant of Chung's lemma  (Lemma~\ref{main:2}) that can handle the case where there are two asymptotic parameters $n$ and $K$; this lemma may be  of independent interest. 
Our approach removes the epoch requirement of type $K\geq \kappa^{\alpha} \log (nK)$ as well as the extra poly-logarithmic terms in the convergence rates.

\subsection{Other related work}
Apart from the works mentioned above, there are few other theoretical studies of without-replacement SGD, although with different objectives.
Shamir~\cite{shamir2016without} demonstrates that without-replacement SGD is not  worse than SGD. 
His proof techniques use tools from  transductive learning theory, and as a result,  his results  only cover the first-epoch and the case where $F$ is a generalized linear function. 
As for another study, a recent work by Nguyen et al.~\cite{nguyen2020unified} studies some non-convex settings.
For the strongly convex case, they also obtain a convergence guarantee of $\OO{\nicefrac{1}{K^2}}$ under some weaker assumptions (for instance, $f_i$'s are not required to be convex).
However, this rate does not beat the baseline rate of SGD, i.e., $\OO{\nicefrac{1}{nK}}$.
 
\subsection{Problem setup and notation}
Given the cost function \eqref{def:F} of the finite-sum form, we consider the following  optimization problem:
\begin{align} \label{opt}
    \min_{x\in \re^d} F(x) = \frac{1}{n}\sum_{i=1}^n f_i(x)\,.
\end{align}
We call $F$ the cost function and $f_i$'s the component functions.
Also, let $\xs$ be the optimum solution of \eqref{opt}.

In solving the above optimization problem \eqref{opt}, we consider the without-replacement version of SGD  with the initial iterate $x_0\in \re^n$.
 For $k\geq 1$,  the $k$-th epoch (pass) is executed by first randomly shuffling the component functions with the permutation $\sigma_k$ on $\{1,2,\dots,n\}$ and going through each of them as 
 \begin{align}
     x_{k,i} = x_{k,i-1}- \ee_{k,i} \g_{\sigma_k(i)}(x_{k,i-1})\quad \text{ for $i=1,2,\dots,n$,}\label{update}
 \end{align} 
 where $x_{k,0} := x_{k-1,n}$ for all $k>1$ and   $x_{1,0}:=x_0$, and $\ee_k$ is the step size for the $i$-th iteration of the $k$-th epoch.
For simplicity, we denote the output of the $k$-th epoch by $y_k$, i.e., $y_k:= x_{k,n}= x_{k+1,0}$.

Lastly, we provide formal definitions of the regularity assumptions for functions.
For definitions, let $h:\re^d\to \re $ be a differentiable function and $G,L,\mm>0$ be some positive numbers.
\begin{definition}We say $h$ is $G$-Lipschitz if $\norm{\nabla h (x)}\leq G$ for all $x\in \re^d$. 
\end{definition}
\begin{definition}We say $h$ is $L$-smooth if $\norm{\nabla h(x)-\nabla h(y)}\leq L\norm{x-y}$ for any $x,y\in \re^d$.
\end{definition}
\begin{definition}We say $h$ is $\mu$-strongly convex if $h(y)\geq h(x) +\inp{\nabla h(x) }{y-x} + \frac{\mu}{2}\norm{y-x}^2$ for any $x,y\in \re^d$.
\end{definition}

Starting from the next section, we first focus on the case where $F$ is $\mu$-strongly convex and $f_i$'s are convex, $G$-Lipschitz and $L$-smooth.
Later in Section~\ref{sec:quad}, we will demonstrate how our techniques can be extended to obtain tight convergence rates for the case where $f_i$'s are additionally assumed to be quadratic.

\section{Preliminaries: existing per-iteration/-epoch bounds}
\label{sec:per}
We first need to quantify the progress made by the algorithm over each iteration.
For  without-replacement SGD, there are two different types of analyses: 
\begin{enumerate}
    \item Per-iteration analysis where one characterizes the progress made at each  iteration.
    \item Per-epoch analysis where one characterizes the aggregate progress made over one epoch.
\end{enumerate}

For per-iteration analysis, Nagaraj, Jain, and Netrapalli~\cite{nagaraj2019sgd} develop  coupling arguments to prove that the progress made by without-replacement SGD is \emph{not} worse than with-replacement SGD.
In particular, their coupling arguments demonstrate the closeness in expectation between the iterates of  without- and with-replacement SGD.
The following is a consequence of their coupling argument:
\begin{proposition}[Per-iteration analysis {\cite[implicit in Section A.1]{nagaraj2019sgd}}] \label{per:0}
Assume  for $L,G,\mm>0$ that each component function $f_i$ is convex, $G$-Lipschitz and $L$-smooth and the cost function $F$ is $\mm$-strongly convex.
Then, for any step size for the $(i+1)$-th iteration of the $k$-th epoch such that $\ee_{k,i+1}\leq \frac{2}{L}$, the following bound holds between the adjacent iterates:
\begin{align}
    \ex{}{\norm{x_{k,i+1}-\xs}^2}&\leq \left( 1- \ee_{k,i+1}\mm/2 \right)\cdot \ex{}{\norm{x_{k,i}-\xs}^2}+ 3\ee_{k,i+1}^2 G^2  + 4\ee_{k,i+1}^3 \kappa L
    G^2\,.  \label{perbd:0}
 \end{align}
where the expectation is taken over the randomness within the $k$-th epoch.
\end{proposition}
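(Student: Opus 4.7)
The plan is to follow the coupling strategy introduced in \cite{nagaraj2019sgd}: compare the without-replacement iterate with a coupled with-replacement analogue, so that the classical strongly-convex one-step contraction becomes applicable.

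First, I would expand the squared distance via the update rule \eqref{update}:
\begin{align*}
\norm{x_{k,i+1}-\xs}^2 = \norm{x_{k,i}-\xs}^2 - 2\eta_{k,i+1}\inp{\nabla f_{\sigma_k(i+1)}(x_{k,i})}{x_{k,i}-\xs} + \eta_{k,i+1}^2\norm{\nabla f_{\sigma_k(i+1)}(x_{k,i})}^2.
\end{align*}
The last term is immediately bounded by $\eta_{k,i+1}^2 G^2$ using $G$-Lipschitzness, and will eventually absorb into the $3\eta_{k,i+1}^2 G^2$ summand together with Young-inequality slack coming below. The real work is in the cross term: conditioned on the history inside the $k$-th epoch, $\sigma_k(i+1)$ is uniform on the \emph{unused} indices, so the conditional mean of $\nabla f_{\sigma_k(i+1)}(x_{k,i})$ is not $\nabla F(x_{k,i})$ and the textbook with-replacement argument does not directly yield the $(1-\eta\mu/2)$ contraction.

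To handle this bias, I would introduce a coupled reference iterate $\tilde{x}_{k,i}$ started at the same point $x_{k,0}$ which takes the same sequence of step sizes but samples its indices uniformly \emph{with replacement} from $\{1,\dots,n\}$. For $\tilde{x}$, $\mu$-strong convexity of $F$, convexity and $L$-smoothness of each $f_i$, together with the step-size condition $\eta_{k,i+1}\leq 2/L$, give the standard one-step contraction $(1-\eta_{k,i+1}\mu/2)\cdot \ex{}{\norm{\tilde{x}_{k,i}-\xs}^2} + O(\eta_{k,i+1}^2 G^2)$. Transferring this back to $x_{k,i}$ is done by writing $\nabla f_{\sigma_k(i+1)}(x_{k,i})-\nabla f_{\sigma_k(i+1)}(\tilde{x}_{k,i})$ in the cross term, bounding it via $L$-smoothness, and invoking Young's inequality tuned to the contraction factor; this produces the advertised $(1-\eta_{k,i+1}\mu/2)$ factor in front of $\ex{}{\norm{x_{k,i}-\xs}^2}$ at the price of an additional $O(\eta_{k,i+1} L/\mu)\cdot \ex{}{\norm{x_{k,i}-\tilde{x}_{k,i}}^2}$ remainder.

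The central obstacle is therefore to bound the coupling error $\ex{}{\norm{x_{k,i}-\tilde{x}_{k,i}}^2}$ within the epoch by $O(\eta_{k,i+1}^2 G^2)$. I would establish this inductively over $i$: at each step the two iterates differ only through a single ``sampling-without-replacement bias,'' whose conditional mean equals the difference between the empirical gradient over the still-unused indices and $\nabla F$, and this bias has magnitude at most $O(\eta_{k,i+1} G)$ per iteration because each $f_i$ is $G$-Lipschitz; strong convexity of $F$ combined with $\eta_{k,i+1}\le 2/L$ prevents the accumulated squared difference from exceeding $O(\eta_{k,i+1}^2 G^2)$. Multiplying this estimate by the $\eta_{k,i+1} L/\mu = \eta_{k,i+1}\kappa$ factor from the previous paragraph, and then by a further $\eta_{k,i+1} L$ coming from smoothness in the cross-term bound, yields exactly the $4\eta_{k,i+1}^3 \kappa L G^2$ term in \eqref{perbd:0}. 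Once this coupling estimate is in hand, the remainder of the derivation is routine bookkeeping.
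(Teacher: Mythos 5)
The paper itself does not prove Proposition~\ref{per:0}; it imports it from \cite{nagaraj2019sgd}, so your sketch has to be measured against that coupling argument, and it diverges from it exactly at the load-bearing step. The genuine gap is your central claim that the epoch-long coupling error satisfies $\ex{}{\norm{x_{k,i}-\tilde{x}_{k,i}}^2}=\OO{\ee_{k,i+1}^2G^2}$ when $\tilde{x}$ is a with-replacement process run from $x_{k,0}$ alongside the without-replacement one. This is false in general: under any coupling of the index sequences, the $j$-th with-replacement draw collides with an already-used index with probability of order $j/n$, each such disagreement perturbs the trajectories by up to $2\ee G$, and because the update maps $z\mapsto z-\ee\nabla f_j(z)$ are only \emph{nonexpansive} (the $f_j$ are convex, not strongly convex), these perturbations accumulate rather than being damped. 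So $\ex{}{\norm{x_{k,i}-\tilde{x}_{k,i}}}$ can grow like $\ee G i^2/n$, i.e.\ up to order $\ee G n$ late in the epoch, making the squared coupling error of order $\ee^2G^2n^2$, not $\ee^2G^2$. Strong convexity of $F$ does not rescue this: it contracts each trajectory toward $\xs$ but gives no contraction of their \emph{difference}, and even granting a $(1-\ee\mm/2)$ contraction of the difference, the stationary discrepancy would be of order $Gi/(\mm n)$, still not $\OO{\ee G}$. Your companion claim that the sampling bias is $\OO{\ee_{k,i+1}G}$ ``because each $f_i$ is $G$-Lipschitz'' is also not right: the gap between the average gradient over unused indices and $\nabla F$ is bounded only by $2Gi/(n-i)$, with no factor of $\ee$, and it blows up near the end of the epoch. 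Since the $4\ee_{k,i+1}^3\kappa LG^2$ term in \eqref{perbd:0} is produced precisely by multiplying your claimed $\OO{\ee^2G^2}$ coupling error by $\ee\kappa L$, the bound is not established by your route.

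The device that actually works (and is what is implicit in \cite[Section A.1]{nagaraj2019sgd}) is a \emph{local, per-iteration} coupling, not a process-level one: for the single step $i+1$ one compares $x_{k,i}$ with an auxiliary iterate built from an index sequence differing from $\sigma_k(1),\dots,\sigma_k(i)$ in at most one position, chosen so that the index used at step $i+1$ is uniform on $\{1,\dots,n\}$ and (conditionally) independent of the auxiliary iterate. Because each update map is nonexpansive for convex $L$-smooth $f_j$ with step at most $2/L$ --- this is where the hypothesis $\ee_{k,i+1}\leq 2/L$ really earns its keep, not merely in the with-replacement contraction --- a single differing update keeps the auxiliary iterate within $2\ee G$ of $x_{k,i}$ \emph{deterministically}, uniformly over the epoch. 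Applying $\mm$-strong convexity of $F$ at the auxiliary point and transferring back via $L$-smoothness and Young's inequality then yields the $(1-\ee_{k,i+1}\mm/2)$ factor together with the $3\ee_{k,i+1}^2G^2$ and $4\ee_{k,i+1}^3\kappa LG^2$ remainders. Your outline would be repaired by replacing the epoch-long with-replacement comparison with this one-swap comparison; the rest of your bookkeeping (expansion of the square, $G$-Lipschitz bound on the gradient norm, Young tuned to the contraction) then goes through essentially as you describe.
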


However, with the above analysis, one can only obtain results comparable to with-replacement SGD, as manifested in \cite[Theorem 2]{nagaraj2019sgd}.
In order to characterize better progress, one needs to characterize the aggregate progress made over one epoch as a whole.
A nice property of without-replacement SGD when considered over one epoch is the following  observation  due to Nedi{\'c} and Bertsekas~\cite[Chapter 2]{nedic2001convergence}.
For each epoch, say the $k$-th epoch, assuming that the iterates $\{x_{k,i}\}_{i=1}^n$ stay close to the initial iterate $x_{k,0}$, the aggregate update direction will closely approximate the \emph{full} gradient at $x_{k,0}$, i.e.,
\begin{align} \label{obs:per}
    \sum_{i=1}^n\g_{\sigma_k(i)} (x_{k,i})\approx \sum_{i=1}^n\g_{\sigma_k(i)} (x_{k,0})= \sum_{i=1}^n\g_{i} (x_{k,0}) = nF(x_{k,0})\,.
\end{align}
Based on this observation, together with the coupling arguments,  Nagaraj, Jain, and Netrapalli!\cite{nagaraj2019sgd} obtain the following improved bound for one epoch as a whole:
\begin{proposition}[Per-iteration analysis {\cite[implicit Section 5.1]{nagaraj2019sgd}}] \label{per:1}
Under the same setting as Proposition~\ref{per:0}, let $\ee_{k+1}\leq \frac{2}{L}$ be the step size for the $(k+1)$-th epoch, i.e., $\ee_{k+1,i}\equiv \ee_{k+1}$ for $i=1,2,\dots, n$.
Then, the following bound holds between the output of the $(k+1)$-th and $k$-th epochs  $y_{k+1}$ and $y_k$: 
\begin{align}
\begin{split}
    \ex{}{\norm{y_{k+1}-\xs}^2}&\leq \left( 1- 3n\ee_{k+1} \mm /4  + n^2 \ee_{k+1}^2L^2   \right)\cdot \norm{y_k-\xs}^2 \\
    &\quad-2n\ee_{k+1}\left( 1-4n\ee_{k+1}\kappa L\right) \cdot (\ex{}{F(y_k)-F(\xs)})+ 20n^2\ee_{k+1}^3\kappa L G^2   + 5n^3\ee_{k+1}^4 L^2G^2\,.
    \end{split} \label{perbd:1}
\end{align}
where the expectation is taken over the randomness within the $(k+1)$-th epoch.
\end{proposition}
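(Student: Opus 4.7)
Fix the epoch and write $\ee := \ee_{k+1}$, $\sigma := \sigma_{k+1}$, and $g_i := \g_{\sigma(i)}(x_{k+1,i-1})$, so that $y_{k+1}=y_k - \ee\sum_{i=1}^n g_i$. The key starting point, already highlighted in~\eqref{obs:per}, is that since $\sigma$ is a permutation of $\{1,\ldots,n\}$,
\[
  \sum_{i=1}^n \g_{\sigma(i)}(y_k) \;=\; \sum_{j=1}^n \g_j(y_k) \;=\; n\G{y_k}
\]
holds deterministically. This yields the decomposition $\sum_i g_i = n\G{y_k}+e$ with
\[
  e \;:=\; \sum_{i=1}^n\bigl(\g_{\sigma(i)}(x_{k+1,i-1}) - \g_{\sigma(i)}(y_k)\bigr),
\]
where $e$ is the intra-epoch drift error quantifying how far the realized aggregate update deviates from an $n\ee$-step of full-gradient descent at $y_k$.

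The plan is to expand $\norm{y_{k+1}-\xs}^2 = \norm{y_k-\xs - \ee(n\G{y_k}+e)}^2$ and bound the four resulting pieces. For the ideal-gradient cross term, $\mm$-strong convexity gives
\[
  -2n\ee\inp{y_k-\xs}{\G{y_k}} \;\le\; -2n\ee\bigl(F(y_k)-F(\xs)\bigr) - n\ee\mm\norm{y_k-\xs}^2 .
\]
For the error cross term, Young's inequality with an $\mm$-weighted split yields
\[
  -2\ee\inp{y_k-\xs}{e} \;\le\; \tfrac{n\ee\mm}{4}\norm{y_k-\xs}^2 + \tfrac{4\ee}{n\mm}\norm{e}^2,
\]
paying $n\ee\mm/4$ of the contraction (bringing it to the target $-\tfrac{3}{4}n\ee\mm$) while converting the first-order-in-$e$ term to a purely $\norm{e}^2/\mm$ residual. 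For the ideal-gradient quadratic term $n^2\ee^2\norm{\G{y_k}}^2$, $L$-smoothness of $F$ gives $\norm{\G{y_k}}^2\le L^2\norm{y_k-\xs}^2$, producing exactly the $+n^2\ee^2 L^2$ summand in the coefficient of $\norm{y_k-\xs}^2$.

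The technical core is a sharp estimate of $\ex{}{\norm{e}^2}$. Termwise $L$-smoothness gives $\norm{e}\le L\sum_i\norm{x_{k+1,i-1}-y_k}$; combining with the $G$-Lipschitz bound on the intra-epoch drift $\norm{x_{k+1,i-1}-y_k}\le \ee(i-1)G$ yields the naive deterministic estimate $\norm{e}^2=O(n^4\ee^2 L^2 G^2)$. This is loose by a factor of $n$; the correct bound,
\[
  \ex{}{\norm{e}^2} \;=\; O\bigl(n^3\ee^2 L^2 G^2\bigr) \;+\; O\bigl(n^3\ee^2 L\cdot (F(y_k)-F(\xs))\bigr),
\]
is obtained via the coupling argument of Nagaraj, Jain, and Netrapalli (which compares $x_{k+1,i-1}$ to an independent with-replacement sample, so $e$ behaves like a sum of mean-zero perturbations and the variance saves a factor of $n$), or equivalently by iterating Proposition~\ref{per:0} within a single epoch to control $\ex{}{\norm{x_{k+1,i-1}-y_k}^2}$ tightly. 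Substituted into $\tfrac{4\ee}{n\mm}\norm{e}^2$ this produces the stated $20n^2\ee^3\kappa L G^2$ term together with an $O(n^2\ee^2\kappa L)$ correction inside the $F(y_k)-F(\xs)$ coefficient (yielding the form $-2n\ee(1-4n\ee\kappa L)$), while the residual $\ee^2\norm{e}^2$ from the expansion of $\norm{\sum_i g_i}^2$ gives the $5n^3\ee^4 L^2 G^2$ term.

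The hard part is exactly the sharp control of $\ex{}{\norm{e}^2}$ just described; the rest is bookkeeping. The naive deterministic bound is off by a factor of $n$ and, crucially, fails to expose the $F(y_k)-F(\xs)$ component of the error that is needed to produce the $\kappa L$ enhancement in the progress coefficient. Assembling the four pieces, collecting $1-n\ee\mm+n\ee\mm/4 = 1-\tfrac{3}{4}n\ee\mm$, and taking expectations over $\sigma$ then delivers the claimed inequality.
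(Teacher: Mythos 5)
The paper does not actually prove Proposition~\ref{per:1}: it imports it verbatim from Nagaraj, Jain, and Netrapalli (cited as implicit in their Section~5.1). Your outline follows exactly the route of that source: write the epoch update as $n\G{y_k}$ plus the intra-epoch drift $e$, use strong convexity on the ideal cross term, Young's inequality on the drift cross term, smoothness on the quadratic term, and a sharp bound on $\ex{}{\norm{e}^2}$ obtained from the without-/with-replacement coupling. So strategically there is no divergence from what the paper relies on.

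However, as a standalone proof your proposal has a genuine gap precisely where you locate the difficulty. The whole content of the proposition is the estimate $\ex{}{\norm{e}^2}=O(n^3\ee^2L^2G^2)+O(n^3\ee L^2\cdot(F(y_k)-F(\xs)))$, and you assert it rather than derive it. Citing the coupling argument is defensible (the paper itself treats the proposition as a citation), but your claim that the same bound follows ``equivalently by iterating Proposition~\ref{per:0} within a single epoch'' is not correct: Proposition~\ref{per:0} only controls distances of iterates to $\xs$, and any bound routed through $\norm{e}\leq L\sum_i\norm{x_{k+1,i-1}-y_k}$ with per-iterate distance control reproduces the naive $n^4$ estimate; the factor-$n$ saving and, crucially, the emergence of the $F(y_k)-F(\xs)$ component come from cancellation across $i$ under the random permutation, which only the coupling (or an explicit variance computation over permutations) captures. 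In addition, your bookkeeping does not actually yield the stated coefficients: expanding $\ee^2\norm{n\G{y_k}+e}^2$ produces the cross term $2n\ee^2\inp{\G{y_k}}{e}$, which you never treat; a crude split gives $2n^2\ee^2L^2\norm{y_k-\xs}^2$ rather than the claimed $n^2\ee^2L^2$, so to land on \eqref{perbd:1} with the constants $3/4$, $4$, $20$, $5$ you must absorb that cross term elsewhere (e.g. via $\norm{\G{y_k}}^2\leq 2L(F(y_k)-F(\xs))$) and track the resulting contributions. As written, the proposal is an accurate roadmap of the argument the paper cites, but it does not yet constitute a proof of the inequality.
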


Having these per-iteration/-epoch progress bounds, the final ingredient of the non-asymptotic convergence rate analysis is to turn these bounds into \emph{across-epochs} global convergence bounds. 

\section{Limitations of the previous approach}
\label{sec:limit}
In this section, we explain the approach used in the previous works~\cite{haochen2018random,nagaraj2019sgd, rajput2020closing} to obtain non-asymptotic bounds and illustrate the limitations of their approach.
To turn the per-iteration/-epoch progress bounds \eqref{perbd:0} and \eqref{perbd:1} into non-asymptotic convergence rates, the previous works take a simple approach of adopting  constant stepsizes. 
For instance, we illustrate the approach in \cite{nagaraj2019sgd} as follows:

\noindent  {\bf Illustration of previous approach:} In Proposition~\ref{per:1}, let us  choose the constant step size $\ee_{k,i}\equiv \ee$.
Since $\ex F(y_k)-F(\xs)>0$, one can disregard the second term in the upper bound \eqref{per:1} as long as $4n\ee\kappa L < 1$.
Suppose that we choose $\ee$ small enough that $4n\ee\kappa L < 1$ holds.
Then, since we also have $\frac{n\ee  \mm}{4}  > n^2 \ee^2L^2$, the per-epoch bound \eqref{perbd:1}  becomes:
\begin{align}
   \ex{}{\norm{y_{k+1}-\xs}^2}&\leq  \left( 1-n\ee  \mm /2    \right)\cdot \ex{}{\norm{y_k-\xs}^2}  + 20n^2\ee^3\kappa L G^2   + 5n^3\ee^4 L^2G^2\,. \label{bd:ex2}
\end{align} 
Now, recursively applying \eqref{bd:ex2} for $k=0,1,\dots,K-1$, we obtain the following bound:
\begin{align*}
    \ex{}{\norm{y_K-\xs}^2}  &\leq \left( 1- n\ee \mm /2  \right)^{K}\cdot \ex{}{\norm{x_{0}-\xs}^2}+ \sum_{t=0}^\infty \left( 1- n\ee \mm/2   \right)^t \left[ 20n^2\ee^3\kappa L G^2   + 5n^3\ee^4 L^2G^2\right]\\
    &\leq \exp\left( - n\ee \mm K/2 \right)\cdot  \norm{x_{0}-\xs}^2+   40n\ee^2\kappa^2 G^2   + 10n^2\ee^3 \kappa LG^2\,.
\end{align*}
 Having established this, they choose  $\ee =   \alpha \cdot \frac{2\log (nK)}{\mm nK} $ for some $\alpha>3$ to obtain:
\begin{align*}
    \ex{}{\norm{y_K-\xs}^2}\leq \frac{\norm{x_{0}-\xs}^2}{(nK)^\alpha}+      \frac{80\alpha^2 \kappa^2 G^2 \log^2 (nK)}{\mu^2 nK^2}  +   \frac{40\alpha^3 \kappa L G^2\log^3(nK)}{\mm^3 nK^3}\,.
\end{align*}
One can easily see that the assumption $4n\ee\kappa L < 1$ is satisfied whenever $K \geq 8\alpha \kappa^2  \log(nK)$.\qed

The limitations of the constant step size approach are manifested in the above illustration: (i) it incurs extra  poly-logarithmic terms in the convergence rate bound and (ii) requires the number of epochs to be sufficiently large.
Indeed,  all previous works share these limitations.
Having noticed the limitations, one might then wonder if one can obtain better results by abandoning constant step size.  
\section{Chung's lemma: analytic tools for varying stepsize} \label{sec:warmup}
As an effort to overcome the limitations, let us allow step sizes to vary across iterations or epochs. 
Let us first consider the per-iteration progress bound in Proposition~\ref{per:0}.
Since Proposition~\ref{per:0} works for any iterations, one can disregard the epoch structure and simply  denote by $x_t$ the $t$-th iterate and by $\ee_{t}$ the step size used for the $t$-th iteration.
Choosing  $\ee_{t}= \frac{2\alpha}{\mm} \cdot \frac{1}{\ini+t}$ for all $t\geq 1$ with the initial index $\ini$, where we choose $\ini=\alpha \cdot \kappa$ to ensure $\ee_{t}\leq \frac{2}{L}$, the per-iteration bound \eqref{perbd:0} becomes (we also use $\ee_{t+1}^3\leq \ee_{t+1}^2 \frac{L}{2}$):
\begin{align}
    \ex{}{\norm{x_{t+1}-\xs}^2}&\leq \left( 1- \frac{\alpha}{\ini+t+1}  \right)\cdot \ex{}{\norm{x_{k,i}-\xs}^2}+ \frac{\alpha^2 G^2 (12\mm^{-2} +32\kappa^3)}{(\ini+t+1)^2} \,.  \label{bd:vary}
\end{align}
In fact, for the bounds of type \eqref{bd:vary}, there are suitable tools for obtaining convergence rates: versions of \emph{Chung's lemma}~\cite{chung1954stochastic}, developed in the stochastic approximation literature. 
Among the various versions of Chung's lemma, there is one non-asymptotic version~\cite[Lemma 1]{chung1954stochastic}:
\begin{lembox}[Non-asymptotic Chung's lemma] \label{main:1}
Let $\{\ab_{k}\}_{k\geq 0}$ be a sequence of positive real numbers.
Suppose that there exist an initial index $\ini >0$ and real numbers $\ccc>0$, $\alpha >\beta>0$ and $\eps>0$ such that $\ab_{k+1}$  satisfies  the following inequality:
\begin{align} \label{adj}
    \ab_{k+1} \leq  \exp\left( -\frac{\alpha }{\ini+k+1}  \right) \ab_k +\frac{\ccc}{(\ini+k+1)^{\beta +1}}\quad \text{  for any integer $k\geq 0$}\,. 
\end{align}
Then, for any $K\geq 1$ we have the following bound:
\begin{align} \label{bound:1}
   \ab_K &\leq \exp\left( -\alpha\cdot \sum_{i=1}^K \frac{1}{\ini+i}  \right)\cdot \ab_0 +   \frac{\frac{1}{\alpha-\beta}   e^{\frac{\alpha}{\ini+1}}\cdot \ccc}{(\ini+K)^\beta} +  \frac{  e^{\frac{\alpha}{\ini+ 1}}\cdot \ccc}{(\ini+K)^{\beta+1}}\\
   &\leq \frac{(\ini+1)^\alpha}{(\ini+K)^\alpha}\cdot \ab_0 +   \frac{\frac{1}{\alpha-\beta}   e^{\frac{\alpha}{\ini+1}}\cdot \ccc}{(\ini+K)^\beta} +  \frac{  e^{\frac{\alpha}{\ini+ 1}}\cdot \ccc}{(\ini+K)^{\beta+1}}\,. \label{bound:12}
\end{align}
\end{lembox}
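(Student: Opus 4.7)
The plan is to unroll the affine recursion \eqref{adj} once and for all, replace the resulting discrete decay sums by closed-form expressions via a harmonic-to-integral comparison, and then bound the leftover power-sum $\sum_{j} (\ini+j)^{\alpha-\beta-1}$ by one more integral estimate. Concretely, iterating \eqref{adj} for $k = 0, 1, \dots, K-1$ yields the standard telescoping expansion
\[
    \ab_K \;\leq\; \exp\!\Bigl(-\alpha \sum_{i=1}^K \tfrac{1}{\ini+i}\Bigr)\, \ab_0 \;+\; \ccc \sum_{j=1}^K \exp\!\Bigl(-\alpha \sum_{i=j+1}^K \tfrac{1}{\ini+i}\Bigr) \frac{1}{(\ini+j)^{\beta+1}}\,,
\]
so the first term of \eqref{bound:1} is already identified and only the error sum needs to be controlled.

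For the decay factors, I would use that $1/(\ini+x)$ is decreasing, so $\sum_{i=j+1}^K 1/(\ini+i) \geq \int_{j+1}^{K+1} dx/(\ini+x) = \log\bigl((\ini+K+1)/(\ini+j+1)\bigr)$, giving $\exp(-\alpha \sum_{i=j+1}^K 1/(\ini+i)) \leq ((\ini+j+1)/(\ini+K))^\alpha$ for every $j \in \{0,\dots,K\}$. Specializing at $j=0$ produces the $(\ini+1)^\alpha/(\ini+K)^\alpha$ factor that converts \eqref{bound:1} into \eqref{bound:12}. For $j \geq 1$, I would absorb the off-by-one via $(\ini+j+1)/(\ini+j) = 1 + 1/(\ini+j) \leq 1 + 1/(\ini+1)$ together with $(1+x)^\alpha \leq e^{\alpha x}$, yielding the uniform bound $((\ini+j+1)/(\ini+K))^\alpha \leq e^{\alpha/(\ini+1)} (\ini+j)^\alpha/(\ini+K)^\alpha$. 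Substituting back, the error sum is dominated by $\ccc\, e^{\alpha/(\ini+1)} (\ini+K)^{-\alpha} \sum_{j=1}^K (\ini+j)^{\alpha-\beta-1}$.

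The final step is to bound the power-sum $\sum_{j=1}^K (\ini+j)^{\alpha-\beta-1}$ by an integral, and I would handle both regimes at once. If $\alpha \leq \beta+1$ the summand is non-increasing and $\sum_{j=1}^K (\ini+j)^{\alpha-\beta-1} \leq \int_0^K (\ini+x)^{\alpha-\beta-1}\, dx \leq (\ini+K)^{\alpha-\beta}/(\alpha-\beta)$. If $\alpha > \beta+1$, peeling off the $j=K$ term and comparing the rest to $\int_1^K (\ini+x)^{\alpha-\beta-1}\, dx$ yields $\sum_{j=1}^K (\ini+j)^{\alpha-\beta-1} \leq (\ini+K)^{\alpha-\beta-1} + (\ini+K)^{\alpha-\beta}/(\alpha-\beta)$. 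Both cases are captured uniformly by $\sum_{j=1}^K (\ini+j)^{\alpha-\beta-1} \leq (\ini+K)^{\alpha-\beta}/(\alpha-\beta) + (\ini+K)^{\alpha-\beta-1}$, and multiplying by $\ccc\, e^{\alpha/(\ini+1)}/(\ini+K)^\alpha$ reproduces precisely the two error terms of \eqref{bound:1}; \eqref{bound:12} then follows from the homogeneous estimate already derived in the previous step.

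The main obstacle I expect is purely the bookkeeping of off-by-one indices, which is what governs whether the final multiplicative constant ends up as the stated $e^{\alpha/(\ini+1)}$ rather than a looser $e^{c\alpha/(\ini+1)}$ for some $c > 1$. Specifically, comparing $\sum_{i=j+1}^K 1/(\ini+i)$ with $\int_{j+1}^{K+1}$ rather than $\int_j^K$, and isolating a single multiplicative correction $(1 + 1/(\ini+j))^\alpha \leq e^{\alpha/(\ini+1)}$, is what makes the constant come out exactly right; no individual step is delicate, but chaining them with the correct endpoints is the whole content of the proof.
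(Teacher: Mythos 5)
Your proposal is correct and follows essentially the same route as the paper's proof: unroll the recursion \eqref{adj}, bound the exponential decay factors by power-law ratios via integral comparison of the harmonic sums (yielding the same $e^{\alpha/(\ini+1)}$ constant), and bound the remaining power sum $\sum_j(\ini+j)^{\alpha-\beta-1}$ by an integral with the same case analysis, recovering \eqref{bound:1} and \eqref{bound:12}. The only difference is bookkeeping---you bound the tail product $\exp(-\alpha\sum_{i=j+1}^{K}\frac{1}{\ini+i})$ directly and absorb the off-by-one via $(1+\frac{1}{\ini+j})^{\alpha}\leq e^{\alpha/(\ini+1)}$, whereas the paper writes it as a ratio of prefix products and bounds each factor---which leads to identical constants.
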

\begin{proof}
Unfortunately, the original ``proof'' contains some errors as pointed out by Fabian~\cite[Discussion above Lemma 4.2]{fabian1967stochastic}.
We are able to correct the original proof; for this, see Section~\ref{sec:correct}.
\end{proof} 
Let us apply Lemma~\ref{main:1} to \eqref{bd:vary} as a warm-up.
From \eqref{bd:vary}, one can see that $\ccc$ in Lemma~\ref{main:1} can be chosen as $G^2 (12\mm^{-2} +32\kappa^3)$. Hence, we obtain:
\begin{corollary} \label{cor:ex}
Under the setting of Proposition~\ref{per:0}, let $\alpha>1$ be a constant, and consider the step size $\ee_{k,i}=\frac{2\alpha/\mm}{\ini+n(k-1)+i}$ for  $\ini:= \alpha \cdot \kappa$. Then the following convergence rate holds for any $K\geq 1$:
\begin{align}
    \ex\norm{y_K-\xs}^2 \leq  \frac{(\ini+1)^{\alpha }\norm{x_0-\xs}^2}{(\ini+nK)^{\alpha }} + \frac{\frac{e}{{\alpha}-1}\alpha^2 G^2 (12\mm^{-2} +32\kappa^3)}{\ini+nK} + \frac{e\alpha^2 G^2 (12\mm^{-2} +32\kappa^3)}{(\ini+nK)^2}\,. \label{bd:cor}
\end{align}
\end{corollary}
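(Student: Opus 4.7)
The plan is to show that the per-iteration bound from Proposition~\ref{per:0} specializes, under the stated step-size rule, to an inequality of the form \eqref{adj}, and then to invoke Lemma~\ref{main:1} directly. To do so, first I would flatten the epoch indexing: since Proposition~\ref{per:0} is a per-iteration statement that is blind to the epoch structure, I relabel the iterate $x_{k,i}$ as $x_{t}$ with $t = n(k-1)+i$ and the step size as $\ee_{t} = \tfrac{2\alpha/\mu}{\ini + t}$. I would then check feasibility of $\ee_t \le 2/L$, required by Proposition~\ref{per:0}: with $\ini = \alpha\kappa$, we have $\ee_t \le \tfrac{2\alpha/\mu}{\alpha\kappa} = 2/L$ for every $t \ge 0$, so Proposition~\ref{per:0} applies at every iteration.

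Next I would substitute the chosen step size into \eqref{perbd:0}. The leading coefficient becomes $1 - \tfrac{\ee_{t+1}\mu}{2} = 1 - \tfrac{\alpha}{\ini + t + 1}$, and using the inequality $\ee_{t+1}^3 \le \ee_{t+1}^2 \cdot \tfrac{2}{L}$ (valid since $\ee_{t+1} \le 2/L$) together with $\ee_{t+1}^2 = \tfrac{4\alpha^2/\mu^2}{(\ini+t+1)^2}$, the additive noise term collapses into a single $\OO{1/(\ini+t+1)^2}$ quantity with constant $\ccc = \alpha^2 G^2 (12\mu^{-2} + 32\kappa^3)$, exactly matching \eqref{bd:vary}. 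Using $1 - x \le e^{-x}$ then turns \eqref{bd:vary} into an inequality of the form \eqref{adj} with $\beta = 1$ and the above $\ccc$, applied to the sequence $\ab_t \defeq \ex\norm{x_t - \xs}^2$.

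At this stage I would invoke Lemma~\ref{main:1} with $K$ replaced by $nK$, since a full run of $K$ epochs corresponds to $nK$ iterations and $y_K = x_{nK}$. The only remaining piece is to simplify the prefactor $e^{\alpha/(\ini+1)}$ appearing in \eqref{bound:12}: since $\ini = \alpha\kappa \ge \alpha$ (using $\kappa \ge 1$), one has $\alpha/(\ini + 1) \le \alpha/\ini = 1/\kappa \le 1$, so $e^{\alpha/(\ini+1)} \le e$. Plugging this into \eqref{bound:12} with $\beta = 1$ yields exactly the three terms on the right-hand side of \eqref{bd:cor}.

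There is no serious obstacle: the work is a mechanical but careful substitution and appeal to Lemma~\ref{main:1}. The only subtlety worth flagging explicitly is the need to verify $\ee_t \le 2/L$ globally (which is why the offset $\ini = \alpha\kappa$ is introduced in the first place) and to control the prefactor $e^{\alpha/(\ini+1)}$ by a dimension-free constant, for which the choice $\ini = \alpha\kappa$ again suffices. Both of these hinge on the same offset choice, which is why the corollary is stated with this particular $\ini$.
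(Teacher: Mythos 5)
Your proposal is correct and follows essentially the same route as the paper: disregard the epoch structure, substitute the step size $\ee_t = \frac{2\alpha/\mm}{\ini+t}$ into Proposition~\ref{per:0} (checking $\ee_t\leq 2/L$ via $\ini=\alpha\kappa$) to obtain \eqref{bd:vary}, and then apply Lemma~\ref{main:1} over the $nK$ flattened iterations with $\beta=1$, bounding $e^{\alpha/(\ini+1)}\leq e$. The only cosmetic difference is that you spell out the bookkeeping ($1-x\leq e^{-x}$, $y_K=x_{nK}$, the choice $\ccc=\alpha^2G^2(12\mm^{-2}+32\kappa^3)$) that the paper leaves implicit in the paragraph preceding the corollary.
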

\noindent Notably, Corollary~\ref{cor:ex} is an improvement over \cite[Theorem 2]{nagaraj2019sgd} as it gets rid of   extra poly-logarithmic terms.
Having this successful example, one might  wonder if  this lemma can be used to improve the constant step approach from Section~\ref{sec:limit}. 
\section{An illustrative failed attempt using Chung's lemma} \label{sec:fail}

Let us apply Lemma~\ref{main:1} to Proposition~\ref{per:1}.
For illustrative purpose, consider an ideal situation where instead of the actual progress bound \eqref{perbd:1}, a nice epoch progress bound of the form \eqref{bd:ex2} holds:
\begin{align}
   \ex{}{\norm{y_{k+1}-\xs}^2}&\leq  \left( 1-n\ee_{k+1}  \mm /2    \right)\cdot \ex{}{\norm{y_k-\xs}^2}  + 20n^2\ee_{k+1}^3\kappa L G^2   + 5n^3\ee_{t+1}^4 L^2G^2\,. \label{bd:ideal}
\end{align} 
Following the same principle as the previous section, let us take $\ee_k = \frac{2\alpha/\mm}{\ini+ nk}$  for some constant $\alpha>2$.
On the other hand, to make things simpler, let us assume that one can take $\ini=0$. 
Plugging this stepsize into \eqref{bd:ideal},  we obtain the following bound for some constants $c >0$:
\begin{align*}
   \ex{}{\norm{y_{k+1}-\xs}^2}&\leq  \left( 1-\frac{\alpha }{ k+1}    \right)\cdot \ex{}{\norm{y_k-\xs}^2}  + \frac{c/n }{(k+1)^3}   \,,  
\end{align*} 
which then yields the following non-asymptotic bound due to  Lemma~\ref{main:1}:
\begin{align}
    \ex{}{\norm{y_{K}-\xs}^2}&\leq \OO{\frac{1}{K^{\alpha}}}+\OO{\frac{1}{nK^2}} + \OO{\frac{1}{nK^3}} \,. \label{bd:hyp}
\end{align}
Although the last two terms in \eqref{bd:hyp} are what we desire (see Table~\ref{table}), the  first term is undesirable.
Even though we choose $\alpha$ large, this bound will still contain the term $O( \nicefrac{1}{K^{\alpha}} )$ which is an obstacle when one tries to show the superiority over the baseline of $\OO{\frac{1}{nK}}$; note that the former is a better rate than the baseline only if $K\geq \Omega(n^{\frac{1}{\alpha-1}})$.
Therefore, for the target convergence bound, one needs other versions of Lemma~\ref{main:1}.

\section{A variant of Chung's lemma and tight convergence rates} \label{sec:tight}

As we have seen in the previous section, Chung's lemma is not enough for capturing the desired convergence rate.
In this section, to capture the right order for both  $n$ and $K$,  we develop a variant of Chung's lemma.

\begin{lembox} \label{main:2}
Let $n>0$ be an integer, and  $\{\ab_{k}\}_{k\geq 0}$ be a sequence of positive real numbers.
Suppose that there exist an initial index $\ini >0$ and real numbers  $\co, \cbo >0$, $\alpha >\beta>0$ and $\eps>0$ such that  the following are satisfied:
\begin{align} 
     \ab_{1} &\leq  \exp\left(-\alpha \sum_{i=1}^{n }\frac{1}{\ini+i}\right)  \ab_{0}  +\co  \quad \text{and} \label{11}\\
        \ab_{k+1} &\leq  \exp \left( -\alpha  \sum_{i=1}^{n } \frac{1}{\ini+nk+i} + \frac{\eps}{k^2}\right)  \ab_k +   \frac{\cbo}{(\ini+n(k+1))^{\beta+1}}\quad \text{  for any integer $k\geq 1$}\,.  \label{kk}
\end{align}
Then, for any $K\geq 1$ we have the following bound for $c:=e^{\eps\pi^2/6 }$:
\begin{align} \label{bound:2}
   \ab_K \leq  \frac{c(\ini+1)^\alpha}{(\ini+nK)^\alpha}\cdot \ab_0 +   \frac{ c\cdot (\ini+n+1)^\alpha\cdot \co}{(\ini+nK)^\alpha}+  \frac{\frac{c}{\alpha-\beta}   e^{\frac{\alpha}{\ini+n+1}}\cdot \cbo}{n(\ini+nK)^\beta} +  \frac{c  e^{\frac{\alpha}{\ini+n+1}}\cdot \cbo}{(\ini+nK)^{\beta+1}}\,.
\end{align}
\end{lembox}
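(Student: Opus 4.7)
My plan is to unroll the two-stage recursion, reduce every accumulated exponential decay to a power-law ratio via a harmonic integral bound, and evaluate one residual algebraic sum. Iterating \eqref{kk} for $k=1,\ldots,K-1$ and chaining \eqref{11} at the front yields
\begin{align*}
\ab_K \le P_{0,K}\cdot \ab_0 + P_{1,K}\cdot \co + \sum_{j=1}^{K-1} P_{j+1,K}\cdot\frac{\cbo}{(\ini+n(j+1))^{\beta+1}},
\end{align*}
with $P_{j,K}:=\exp\bigl(-\alpha\sum_{m=nj+1}^{nK}\tfrac{1}{\ini+m}\bigr)\cdot\exp\bigl(\eps\sum_{k=\max(j,1)}^{K-1}\tfrac{1}{k^2}\bigr)$ collecting the cumulative decay from epoch $j$ to epoch $K$. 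Since $\sum_{k\ge 1}k^{-2}=\pi^2/6$, the second exponential in every $P_{j,K}$ is bounded uniformly by $c=e^{\eps\pi^2/6}$, which pulls out as the overall prefactor in \eqref{bound:2}.

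For the remaining harmonic exponential, monotonicity of $x\mapsto 1/(\ini+x)$ gives the telescoping bound $\sum_{m=a+1}^{b}\tfrac{1}{\ini+m}\ge \log\tfrac{\ini+b+1}{\ini+a+1}$, whence $\prod_{m=a+1}^b e^{-\alpha/(\ini+m)}\le\bigl(\tfrac{\ini+a+1}{\ini+b+1}\bigr)^\alpha$. Taking $(a,b)=(0,nK)$ converts $P_{0,K}$ into the first term of \eqref{bound:2}, and $(a,b)=(n,nK)$ converts $P_{1,K}$ into the second term (both after absorbing $\ini+nK+1\ge \ini+nK$).

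The $\cbo$-noise sum is the delicate piece. The same conversion turns $P_{j+1,K}$ into at most $c\bigl(\tfrac{\ini+n(j+1)+1}{\ini+nK+1}\bigr)^\alpha$; absorbing the ``$+1$'' in the numerator via $(\ini+n(j+1)+1)^\alpha\le e^{\alpha/(\ini+n(j+1))}(\ini+n(j+1))^\alpha\le e^{\alpha/(\ini+n+1)}(\ini+n(j+1))^\alpha$ --- valid since $j,n\ge 1$ force $\ini+n(j+1)\ge \ini+n+1$ --- reduces the claim to
\[\sum_{m=2}^{K}(\ini+nm)^{\alpha-\beta-1}\le (\ini+nK)^{\alpha-\beta-1}+\frac{(\ini+nK)^{\alpha-\beta}}{n(\alpha-\beta)}.\]
Here the first summand is obtained by peeling off the $m=K$ term, and the rest by the standard integral $\int_1^K(\ini+nx)^{\alpha-\beta-1}dx$, which is an upper bound on $\sum_{m=2}^{K-1}(\ini+nm)^{\alpha-\beta-1}$ whether the integrand is increasing ($\alpha-\beta\ge 1$) or decreasing ($\alpha-\beta<1$). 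Multiplying through, the two summands produce exactly the fourth and third terms of \eqref{bound:2}.

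The main obstacle is purely bookkeeping: one must handle both signs of $\alpha-\beta-1$ with a single integral estimate and absorb the stray ``$+1$'' shifts so that the final denominator is $(\ini+nK)$ rather than $(\ini+n(K+1))$. The peel-off trick above handles the latter, and the two-sided monotonicity argument handles the former. Beyond that there is no genuinely new idea --- the only reason the per-epoch drift $\eps/k^2$ does not spoil the power-law rate is that its series is summable, which is precisely the difference between this lemma and a naive extension of Lemma~\ref{main:1}.
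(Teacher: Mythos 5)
Your proposal is correct and follows essentially the same route as the paper: unroll the two-stage recursion, pull out the summable drift as the uniform factor $c=e^{\eps\pi^2/6}$, convert the accumulated harmonic exponentials into power-law ratios via integral comparison, and bound the residual sum $\sum_m(\ini+nm)^{\alpha-\beta-1}$ by an integral plus its last term. The only differences are cosmetic bookkeeping (you recover the $e^{\alpha/(\ini+n+1)}$ factor by absorbing the ``$+1$'' shift rather than from the lower bound on the partial product, and you merge the paper's three-case integral estimate into a single two-sided monotonicity argument), so no further changes are needed.
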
 
\begin{proof}[Proof of Lemma~\ref{main:2}]
 See Section~\ref{pf:var}.
\end{proof}
\subsection{Tight convergence rate for strongly convex costs}
Now we use Lemma \ref{main:2} to obtain a tight convergence rate.
Let $\ab_k:=   \ex{}{\norm{y_k-\xs}^2}$ for $k\geq 1$ and $\ab_0:=\norm{x_0-\xs}^2$.
Let $\alpha>2$ be an arbitrarily chosen constant.
For the first epoch, we take the following iteration-varying step size: $\ee_{1,i}= \frac{2\alpha}{\mm} \cdot \frac{1}{\ini+i}$, where $\ini=\alpha \cdot \kappa$ to ensure $\ee_{1,i}\leq \frac{2}{L}$.
Then, similarly to Corollary~\ref{cor:ex}, yet this time by using the bound \eqref{bound:1} in Lemma~\ref{main:1}, one can derive the  the following bound:
 \begin{align}
     \ab_1&\leq \exp\left( -\alpha \cdot \sum_{i=1}^n \frac{1}{\ini+i}  \right)\cdot \ab_0 +   \frac{a_1}{\ini+n}  \,, \label{bd:11}
 \end{align}
 where $a_1:=\alpha^2 G^2\cdot [\frac{e}{{\alpha}-1} (12\mm^{-2} +32\kappa^3) + e\alpha^2 G^2 (12\mm^{-1}L^{-1} +32\kappa^2)]$, i.e., $a_1=\OO{\kappa^3}$.

Next, let us establish bounds of the form  \eqref{kk} for the $k$-th epoch for $k\geq 2$.
From the second epoch on, we use the same step size within an epoch.
More specifically, for the $k$-th epoch we choose $\ee_{k,i}\equiv \ee_k =\frac{2\alpha/\mm}{\ini + nk}$.
Using similar argument to obtain \eqref{bd:ex2} in Section~\ref{sec:limit}, Proposition~\ref{per:1} yields the following bound for $k\geq 8\alpha \kappa^2-1$:
\begin{align}
  \ab_{k+1}&\leq  \exp\left( -n\ee_{k+1}  \mm /2    \right)\cdot \ab_k  + 20n^2\ee_{k+1}^3\kappa L G^2   + 5n^3\ee_{k+1}^4 L^2G^2\,. \label{bd:01}
\end{align} 
For $k< 8\alpha \kappa^2-1$, recursively applying Proposition~\ref{per:0}  with the fact $(n\ee_{k+1})^{-1}\leq 4\kappa L +L/(2n)$ implies:
\begin{align}
    \ab_{k+1}&\leq \exp\left( - n\ee_{k+1}\mm/2 \right)\cdot \ab_k+ 3n^2\ee_{k+1}^3G^2 (4\kappa L +L/(2n))  + 4n\ee_{k+1}^3 \kappa L
    G^2\,.  \label{bd:02}
 \end{align}
 Therefore, combining \eqref{bd:01} and \eqref{bd:02}, we obtain the following bound which holds for any $k\geq 1$:
 \begin{align}
    \ab_{k+1}&\leq \exp\left( - n\ee_{k+1}\mm/2 \right)\cdot\ab_k + a_2\cdot n^2\ee_{k+1}^3 \,,  \label{bd:03}
 \end{align}
 where $a_2:= 12\kappa L G^2 + (3L/2+4\kappa LG^2)/n+20\kappa LG^2 + 5\mu^2G^2/8$, i.e., $a_2=\OO{\kappa}$.
 Let us modify the coefficient of $\ab_k$ in \eqref{bd:03} so that it fits into the form of \eqref{kk} in Lemma~\ref{main:2}.
 First $ \exp\left( - n\ee_{k+1}\mm/2 \right) =\exp\left( - \alpha n/(\ini+n(k+1)) \right)$.
 This expression can be modified as 
 \begin{align*}
     \exp\left[-\alpha  \cdot\sum_{i=1}^n  \frac{1}{\ini+nk+i}+ \alpha \cdot\sum_{i=1}^n \left(  \frac{1}{\ini+nk+i}- \frac{1}{\ini+n(k+1)} \right) \right]\,,
\end{align*}     
which is then upper bounded by $\exp\left[-\alpha  \cdot\sum_{i=1}^n  \frac{1}{\ini+nk+i}+ \frac{\alpha }{k^2} \right]$.
Thus, \eqref{bd:03} can be rewritten as:
 \begin{align}
     \ab_{k+1} &\leq  \exp\left(-\alpha \cdot\sum_{i=1}^n  \frac{1}{\ini+nk+i}+ \frac{\alpha }{k^2}\right)\cdot \ab_k + \frac{8a_2 \alpha^3 n^2\mu^{-3}}{(\ini + n(k+1))^3}\,. \label{bd:kk} 
 \end{align}
 Now applying Lemma~\ref{main:2} with \eqref{bd:11} and \eqref{bd:kk} implies the following result:
\begin{thmbox}[Strongly convex costs]
\label{thm:1}
Assume  for $L,G,\mm>0$ that each component function $f_i$ is convex, $G$-Lipschitz and $L$-smooth and the cost function $F$ is $\mm$-strongly convex.
For  any constant $\alpha>2$, let $\ini:= \alpha \cdot \kappa$, and consider the step sizes $\ee_{1,i} =  \frac{2\alpha/\mm}{\ini+i}$ for $i=1,\dots, n$ and  for $k>1$, $\ee_{k,i}=  \frac{2\alpha/\mm}{\ini+nk}$ for $i=1,2,\dots, n$.
Then, the following convergence bound holds for any $K\geq 1$: 
\begin{align} \label{bd:master}
     \ex{}{\norm{y_{K}-\xs}^2} \leq  \frac{c_1 \cdot n}{(\ini+nK)^2} + 
     \frac{c_2 \cdot (\ini+n+1)^{\alpha-1}}{(\ini+nK)^{\alpha}} +\frac{c_3\cdot  \norm{y_{0}-\xs}^2}{(\ini+nK)^{\alpha}}   \,,
 \end{align}
 where $c_1=\OO{\kappa^4}$, $c_2=\OO{\kappa^3}$, and $c_3=\OO{\kappa^\alpha}$.
\end{thmbox}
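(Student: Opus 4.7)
The plan is to apply Lemma~\ref{main:2} directly to the sequence $\ab_k := \ex\norm{y_k - \xs}^2$, using the bounds \eqref{bd:11} and \eqref{bd:kk} that the excerpt has already assembled from Propositions~\ref{per:0} and \ref{per:1}. First I would match parameters: take $\beta := 2$, $\eps := \alpha$, $\co := a_1/(\ini+n)$, and $\cbo := 8 a_2 \alpha^3 n^2 \mu^{-3}$. Hypothesis \eqref{11} then follows immediately from \eqref{bd:11}, while \eqref{kk} follows from \eqref{bd:kk}. The only non-mechanical sub-step here is verifying the inequality used to pass from \eqref{bd:03} to \eqref{bd:kk}, namely $\sum_{i=1}^n [1/(\ini+nk+i) - 1/(\ini+n(k+1))] \leq 1/k^2$; this holds because each summand equals $(n-i)/[(\ini+nk+i)(\ini+n(k+1))]$, which is at most $1/(nk(k+1))$.

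Second, I would substitute these parameters into \eqref{bound:2}, producing four terms. Two of them scale as $(\ini+nK)^{-\alpha}$: the term multiplying $\ab_0$ gives the $c_3$ contribution, while the term $c(\ini+n+1)^\alpha \co/(\ini+nK)^\alpha$ gives the $c_2$ contribution -- note that the extra $1/(\ini+n)$ hidden inside $\co$ lowers the effective exponent on $(\ini+n+1)$ from $\alpha$ to $\alpha-1$ after absorbing the discrepancy between $\ini+n$ and $\ini+n+1$ into constants. The remaining two terms, proportional to $\cbo/[n(\ini+nK)^2]$ and $\cbo/(\ini+nK)^3$, can be merged since $(\ini+nK)^{-1} \leq n^{-1}$ whenever $K\geq 1$; after factoring out $n$ this yields the $c_1 \cdot n/(\ini+nK)^2$ term.

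The principal obstacle I anticipate is not the mechanical invocation of Lemma~\ref{main:2} but rather bookkeeping of the condition-number dependence of each constant. Specifically, one has to track that $a_1 = O(\kappa^3)$ propagates into $c_2 = O(\kappa^3)$; that $a_2 = O(\kappa L G^2)$ combined with the $\mu^{-3}$ factor from $\ee_{k+1}^3$ and the division by $n$ gives $\cbo/[n(\ini+nK)^2] = O(\kappa^4)\cdot n/(\ini+nK)^2$, yielding $c_1 = O(\kappa^4)$; and that $(\ini+1)^\alpha = (\alpha\kappa+1)^\alpha$ yields $c_3 = O(\kappa^\alpha)$. Throughout, the prefactors $c = e^{\alpha\pi^2/6}$ and $e^{\alpha/(\ini+n+1)} \leq e$ produced by Lemma~\ref{main:2} are absorbed into the implicit constants. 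A final sanity check is that $\ee_{k,i} \leq 2/L$ holds at every iteration, which is precisely why $\ini := \alpha\kappa$ was chosen, so that Propositions~\ref{per:0} and \ref{per:1} apply on every step.
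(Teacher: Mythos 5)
Your proposal is correct and follows essentially the same route as the paper: the paper's proof of Theorem~\ref{thm:1} is precisely the application of Lemma~\ref{main:2} to \eqref{bd:11} and \eqref{bd:kk} with the parameter identifications you make ($\beta=2$, $\eps=\alpha$, $\co=a_1/(\ini+n)$, $\cbo=8a_2\alpha^3 n^2\mu^{-3}$), and your added details --- the $(n-i)/[(\ini+nk+i)(\ini+n(k+1))]\leq 1/(nk(k+1))$ check behind the $\alpha/k^2$ correction, merging the two $\cbo$-terms via $(\ini+nK)^{-1}\leq n^{-1}$, and the $\kappa$-bookkeeping for $c_1,c_2,c_3$ --- are exactly the steps the paper leaves implicit when it says the lemma ``implies the following result.''
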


\subsection{Tight convergence rate for quadratic costs}
 \label{sec:quad}
 
 Similarly, yet using some improved per-epoch progress bound developed in \cite{rajput2020closing}, one can prove the following better convergence rate for the case where $F$ is additionally assumed to be quadratic.
Note that this case is slightly more general than the  setting  considered in \cite{gurbuzbalaban2015random,haochen2018random} where each $f_i$ is assumed to be quadratic.
The details are deferred to Appendix~\ref{appen:quad}.
\begin{thmbox}[Quadratic costs]
\label{thm:2}
Under the setting of Theorem~\ref{thm:1}, we additionally assume that $F$ is quadratic.
For  any constant $\alpha>4$, let $\ini:= \alpha \cdot \kappa$, and consider the step sizes $\ee_{1,i} =  \frac{2\alpha/\mm}{\ini+i}$ for $i=1,\dots, n$ and  for $k>1$, $\ee_{k,i}=  \frac{2\alpha/\mm}{\ini+nk}$ for $i=1,2,\dots, n$.
Then, the following convergence bound holds for any $K\geq 1$: 
\begin{align} \label{bd:master2}
     \ex{}{\norm{y_{K}-\xs}^2} \leq  \frac{c_1 \cdot n^2}{(\ini+nK)^3}  +  \frac{c_2}{(\ini+nK)^2}+  
     \frac{c_3 \cdot (\ini+n+1)^{\alpha-1}}{(\ini+nK)^{\alpha}} +\frac{c_4\cdot  \norm{y_{0}-\xs}^2}{(\ini+nK)^{\alpha}}   \,,
 \end{align}
 where $c_1 =\OO{\kappa^6}$, $c_2=\OO{\kappa^4}$, $c_3=\OO{\kappa^3}$, and $c_4=\OO{\kappa^\alpha}$.
\end{thmbox}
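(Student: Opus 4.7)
\textbf{Proof plan for Theorem~\ref{thm:2}.} The plan is to mirror the two-stage strategy used for Theorem~\ref{thm:1}, with the quadratic assumption entering only through a sharper per-epoch progress bound. The first epoch is handled exactly as in the strongly convex case: with iteration-varying step size $\ee_{1,i} = 2\alpha/(\mu(\ini+i))$, Proposition~\ref{per:0} combined with the non-asymptotic Chung's lemma (Lemma~\ref{main:1}) yields the first-epoch estimate
\begin{align*}
\ab_1 \leq \exp\Bigl(-\alpha \sum_{i=1}^n \tfrac{1}{\ini+i}\Bigr)\cdot \ab_0 + \tfrac{a_1}{\ini+n}, \qquad a_1 = \OO{\kappa^3},
\end{align*}
identical to \eqref{bd:11}. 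No quadratic structure is used here, and the first epoch contributes only to the $(\ini+n+1)^{\alpha-1}/(\ini+nK)^\alpha$ and $\norm{y_0-\xs}^2/(\ini+nK)^\alpha$ summands of \eqref{bd:master2}.

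For the subsequent epochs ($k\ge 1$) with constant step $\ee_{k+1} = 2\alpha/(\mu(\ini+n(k+1)))$, I would invoke the improved per-epoch recursion available for quadratic $F$ from \cite{rajput2020closing}. The key improvement over Proposition~\ref{per:1} is that, since each $\nabla f_i$ is affine, the discrepancy $\sum_{i=1}^n[\nabla f_{\sigma_{k+1}(i)}(x_{k,i-1}) - \nabla f_{\sigma_{k+1}(i)}(y_k)]$ appearing in the observation \eqref{obs:per} admits an exact first-order Taylor expansion whose leading term averages favorably over a uniformly random permutation. This buys an extra factor of $1/n$ relative to the $20n^2\ee_{k+1}^3\kappa L G^2$ noise term of Proposition~\ref{per:1}, yielding a recursion of the form
\begin{align*}
\ab_{k+1} \leq \exp(-n\ee_{k+1}\mu/2)\cdot \ab_k + B_1\cdot n\ee_{k+1}^3 + B_2\cdot n^3\ee_{k+1}^4,
\end{align*}
where $B_1, B_2$ are polynomial in $\kappa$ and $G$ and independent of $n$. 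For the warm-up range $k < 8\alpha\kappa^2-1$ (where this recursion does not yet apply), I would iterate Proposition~\ref{per:0} over $n$ steps exactly as in \eqref{bd:02} and verify a recursion of the same shape with possibly larger constants. The contraction factor is then rewritten using $\exp(-n\ee_{k+1}\mu/2) \leq \exp(-\alpha \sum_{i=1}^n 1/(\ini+nk+i) + \alpha/k^2)$, mirroring the passage from \eqref{bd:03} to \eqref{bd:kk}, so that the recursion fits the hypothesis \eqref{kk} of Lemma~\ref{main:2}.

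I then apply Lemma~\ref{main:2} twice, once per noise term, and sum the resulting estimates. With noise constant $\OO{n}$ and $\beta = 2$, the $B_1 n\ee_{k+1}^3$ contribution yields the $c_2/(\ini+nK)^2$ summand of \eqref{bd:master2}; with noise constant $\OO{n^3}$ and $\beta = 3$, the $B_2 n^3\ee_{k+1}^4$ contribution yields the $c_1 n^2/(\ini+nK)^3$ summand. The hypothesis $\alpha > 4$ ensures $\alpha > \beta$ with margin in both invocations. Combining with the first-epoch bound delivers \eqref{bd:master2}.

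The main obstacle is establishing the improved per-epoch recursion with $n$-independent constants $B_1, B_2$. The coupling argument of \cite{rajput2020closing} supplies the conceptual ingredient, but one must carefully track (i) how their analysis interacts with our two-phase step-size schedule (diminishing within the first epoch, constant within each later epoch), and (ii) that no hidden factor of $n$ creeps into $B_1$ or $B_2$ through the higher-order Taylor remainder in the affine expansion — any such factor would spoil the tight rate. These checks are technical but routine consequences of smoothness and Lipschitzness of the $f_i$'s, and they constitute the bulk of the argument deferred to Appendix~\ref{appen:quad}.
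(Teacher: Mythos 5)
Your proposal follows essentially the same route as the paper's Appendix~\ref{appen:quad}: the identical first-epoch bound \eqref{bd:11}, the improved quadratic per-epoch recursion from \cite{rajput2020closing} (cited in the paper as Proposition~\ref{per:2} and simplified to exactly your two noise terms $b_3 n\ee_{k+1}^3$ and $b_2 n^3\ee_{k+1}^4$ in \eqref{new:combine}), the same warm-up phase via Proposition~\ref{per:0}, and the same rewriting of the contraction factor to fit the hypothesis of Lemma~\ref{main:2}. The one cosmetic difference is the last step: where you would ``apply Lemma~\ref{main:2} twice and sum'' (which the lemma as stated does not literally license, since the sequence satisfies neither single-noise recursion \eqref{kk} on its own), the paper instead extends the lemma to the two-noise-term recursion \eqref{kk:new} and records the additional terms \eqref{bd:add} --- the same linearity-of-unrolling observation --- and it simply cites the improved per-epoch bound rather than re-deriving it, so the technical burden you flag as the bulk of the work is outsourced to prior art.
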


\section{Proofs of the versions of Chung's lemma (Lemmas~\ref{main:1} and \ref{main:2})} \label{sec:correct}
  We begin with an elementary fact that we will use throughout:
\begin{proposition}[Integral approximation; see e.g. {\cite[Theorem~14.3]{lehman2010mathematics}})] \label{pro:approx}
Let $f:\re^+ \to \re^+$ be a non-decreasing continuous function.
Then, for any integers $1\leq m <n$, $\int_m^n f(x)dx +f(m) \leq \sum_{i=m}^n f(i) \leq \int_m^n f(x)dx +f(n)$.
Similarly, if $f$ is non-increasing, then for any integers $1\leq m <n$, $\int_m^n f(x)dx +f(n) \leq \sum_{i=1}^n f(i) \leq \int_1^n f(x)dx +f(m)$.
\end{proposition}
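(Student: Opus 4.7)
The plan is to reduce the bounds to a comparison of $f$ with its endpoint values on each unit interval $[i,i{+}1]$ and then telescope; this is the standard integral-test argument and needs only monotonicity plus Riemann integrability (which continuity supplies).

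I will carry out the non-decreasing case in detail. For each integer $i$ with $m\leq i\leq n-1$, monotonicity gives $f(i)\leq f(x)\leq f(i+1)$ for all $x\in[i,i+1]$, so integrating over $[i,i+1]$ yields $f(i)\leq \int_i^{i+1}f(x)\,dx\leq f(i+1)$. Summing for $i=m,m+1,\dots,n-1$ and using additivity of the integral on adjacent unit intervals gives
\begin{align*}
\sum_{i=m}^{n-1} f(i) \;\leq\; \int_m^n f(x)\,dx \;\leq\; \sum_{i=m+1}^{n} f(i).
\end{align*}
Adding $f(n)$ to both sides of the left inequality converts $\sum_{i=m}^{n-1}f(i)$ into $\sum_{i=m}^{n}f(i)$, yielding the upper bound $\sum_{i=m}^{n}f(i)\leq \int_m^n f(x)\,dx+f(n)$. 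Symmetrically, adding $f(m)$ to both sides of the right inequality converts $\sum_{i=m+1}^{n}f(i)$ into $\sum_{i=m}^{n}f(i)$, producing the lower bound $\int_m^n f(x)\,dx+f(m)\leq \sum_{i=m}^{n}f(i)$. Together these establish the non-decreasing case.

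The non-increasing case is handled by the same unit-interval argument with the inequality flipped: monotonicity now gives $f(i+1)\leq \int_i^{i+1}f(x)\,dx\leq f(i)$, and summing for $i=m,\dots,n-1$ together with the analogous endpoint adjustments yields the claimed bounds (with $f(n)$ appearing on the lower side and $f(m)$ on the upper side). There is no real obstacle in the proof; the only care required is bookkeeping the endpoint indices so that both $\sum_{i=m}^{n-1}$ and $\sum_{i=m+1}^{n}$ are converted into the symmetric form $\sum_{i=m}^{n}$ before the final inequalities are read off. Continuity is used only to guarantee that $\int_m^n f(x)\,dx$ exists as a Riemann integral; monotonicity of $f$ would in fact suffice for that on its own.
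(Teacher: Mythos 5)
Your proof is correct and is exactly the standard unit-interval comparison behind the integral test, which is the argument the paper implicitly relies on by citing \cite[Theorem~14.3]{lehman2010mathematics} rather than giving its own proof. The only minor remark is that the paper's statement for the non-increasing case has index typos ($\sum_{i=1}^n$ and $\int_1^n$ should read $\sum_{i=m}^n$ and $\int_m^n$); your proof correctly establishes the intended symmetric bounds $\int_m^n f(x)\,dx + f(n) \leq \sum_{i=m}^n f(i) \leq \int_m^n f(x)\,dx + f(m)$.
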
  

\noindent We first prove Lemma~\ref{main:1}, thereby coming up with a correct proof of the non-asymptotic Chung's lemma.
\subsection{A correct proof of Chung's lemma (Proof of Lemma~\ref{main:1})}
 For simplicity, let us define the following quantities for $k\geq 1$: 
\begin{align*}
a_{k}:=\exp\left(- \frac{\alpha }{k_0+ k}  \right) ~~\text{and}~~ c_k:=\frac{\ccc}{(\ini+  k)^{\beta+1}}\,.
\end{align*}  
Using these notations,  the recursive relation \eqref{adj}  becomes:
\begin{align} 
\ab_{k+1} &\leq  a_{k+1} \cdot \ab_k +c_{k+1} \quad \text{for any integer $k\geq 1$.} \label{adj2}
\end{align}
After recursively applying \eqref{adj2} for $k=0,1,2,\dots, K-1$, one obtain the following bound:
\begin{align}\label{6:int2}
    \ab_K \leq \ab_0  \prod_{j=1}^{K}a_j   + \left(\prod_{j=1}^{K}a_j \right)\cdot\left[\sum_{k=1}^K  \left(\prod_{j=1}^{k}a_j \right)^{-1} c_{k} \right] \,.
\end{align}
Now let us upper and lower bound the product of $a_j$'s.
Note that 
\begin{align*}
    \prod_{j=1}^ka_j = \exp \left(-\alpha \sum_{i=1}^{k} \frac{1}{\ini+i}\right)\quad \text{for any $k\geq 1$,}
\end{align*} 
and hence  Proposition~\ref{pro:approx} with $f(x) = \frac{1}{\ini+x}$ implies 
\begin{align} \label{6:est1}
    e^{-\frac{\alpha}{\ini+1}}\left(\frac{\ini+1}{\ini+k} \right)^\alpha  \leq \prod_{j=1}^ka_j \leq \left(\frac{\ini+1}{\ini+k} \right)^\alpha \,.
\end{align}
Therefore, we have
\begin{align*}
   \sum_{k=1}^K  \left(\prod_{j=1}^{k}a_j\right)^{-1} c_{k} &\leq e^{\frac{\alpha}{\ini+1}} \sum_{k=1}^K \left(\frac{\ini+k}{\ini+1} \right)^\alpha  \cdot \frac{\ccc}{(\ini+k)^{\beta+1}} = \frac{ e^{\frac{\alpha}{\ini+1}} \cdot \ccc}{(\ini+1)^\alpha}\cdot \sum_{k=1}^K  (\ini+k)^{\alpha-\beta-1} \,.
\end{align*}
Applying Proposition~\ref{pro:approx} with $f(x)= (\ini+x)^{\alpha-\beta-1}$ to the above, since $\frac{1}{\alpha-\beta}(\ini+x)^{\alpha-\beta}$ is an anti-derivative of $f$, we obtain the following upper bounds:
\begin{align*}\frac{ e^{\frac{\alpha}{\ini+1}} \cdot \ccc}{(\ini+1)^\alpha}\cdot  \begin{cases}
   \frac{1}{\alpha-\beta}\left( (\ini+K)^{\alpha-\beta} -  (\ini+1)^{\alpha-\beta}\right) + (\ini+K)^{\alpha-\beta-1} , &\text{if }\alpha>\beta+1,\\  
   K, &\text{if }\alpha=\beta+1,\\ 
   \frac{1}{\alpha-\beta}\left( (\ini+K)^{\alpha-\beta} -  (\ini+1)^{\alpha-\beta}\right) + (\ini+1)^{\alpha-\beta-1}  &\text{if }\alpha<\beta+1.   \end{cases}  
\end{align*}
Combining all three cases, we conclude:
\begin{align*}
     \sum_{k=2}^K  \left(\prod_{j=2}^{k}a_j\right)^{-1} c_{k} &\leq \frac{ e^{\frac{\alpha}{\ini+1}} \cdot \ccc}{(\ini+1)^\alpha}\cdot \left( \frac{(\ini+K)^{\alpha-\beta}}{\alpha-\beta}+ (\ini+K)^{\alpha-\beta-1}\right)\,.
\end{align*}
Plugging this back to \eqref{6:int2} and using  \eqref{6:est1} to upper bound $\prod_{j=1}^{K}a_j$, we obtain \eqref{bound:1}.
Using \eqref{6:est1} once again to upper bound the coefficient of $\ab_0$,  we obtain \eqref{bound:12}. Hence, the proof is completed.
 
\subsection{Proof of Lemma~\ref{main:2}} \label{pf:var}
 
 The proof is generally analogous to that of Lemma~\ref{main:1}, while some distinctions are required to capture the correct asymptotic for the two parameters $n$ and $K$.
 To simplify notations, let us define the following quantities for $k\geq 1$: 
\begin{align*}
a_{k}:=\exp\left(- \alpha \cdot \sum_{i=1}^n \frac{1}{\ini+n(k-1)+i}  \right),~~ b_k:=\exp\left(\frac{\eps}{(k-1)^2} \right),~~\text{and}~~ c_k:=\frac{\cbo}{(\ini+nk)^{\beta+1}}\,.
\end{align*}  
Using these notations,  the recursive relations \eqref{11} and \eqref{kk} become:
\begin{align}
\ab_1 &\leq a_1\cdot \ab_0 +\co \label{112}\\
     \ab_{k+1} &\leq  a_{k+1}b_{k+1}\cdot \ab_k +c_{k+1} \quad \text{for any integer $k\geq 1$.} \label{kk2}
\end{align}
Recursively applying \eqref{kk2} for $k=1,2,\dots, K-1$ and then \eqref{112}, we obtain:
\begin{align}\label{int}
    \ab_K \leq a_1\ab_0  \prod_{j=2}^{K}a_jb_j  + \left(\prod_{j=2}^{K}a_jb_j\right)\cdot\left[ \co+ \sum_{k=2}^K  \left(\prod_{j=2}^{k}a_jb_j\right)^{-1} c_{k} \right] \,.
\end{align}
Again one can use the fact $\sum_{i\geq 1}{i^{-2}} =\frac{\pi^2}{6}$ to upper and lower bound the product of $b_j$'s:
\begin{align}
    1\leq \prod_{i=2}^Kb_i \leq \exp \left(\sum_{i=2}^K\frac{\eps}{(i-1)^2}\right) \leq \exp\left( \eps \pi^2/6\right)\,. \label{prod:b}
\end{align}
Applying \eqref{prod:b} to \eqref{int}, we obtain the following bound (recall $c= e^{ \eps \pi^2/6}$):
\begin{align}\label{int2}
    \ab_K \leq c \ab_0 \prod_{j=1}^{K}a_j  + c \prod_{j=2}^{K}a_j\cdot\left[ \co+ \sum_{k=2}^K  \left(\prod_{j=2}^{k}a_j\right)^{-1} c_{k} \right]  \,.
\end{align} 
To obtain upper and lower bounds on the product of $a_j$'s, again note for any $1\leq \ell\leq k$ that
\begin{align*}
    \prod_{j=\ell}^ka_j = \exp \left(-\alpha \cdot \sum_{i=1}^{(k-\ell+1)n} \frac{1}{\ini+n(\ell-1)+i}\right)\,,
\end{align*} which can then be estimated as follows using Proposition~\ref{pro:approx}:
\begin{align} \label{est1}
   e^{-\frac{\alpha}{\ini+n(\ell-1)+1}} \left(\frac{\ini+n(\ell-1)+1}{\ini+nk} \right)^\alpha  \leq \prod_{j=\ell}^ka_j \leq \left(\frac{\ini+n(\ell-1)+1}{\ini+nk} \right)^\alpha\,.
\end{align}
Therefore, we have
\begin{align*}
   \sum_{k=2}^K  \left(\prod_{j=2}^{k}a_j\right)^{-1} c_{k} &\leq   e^{\frac{\alpha}{\ini+n+1}}\sum_{k=2}^K \left(\frac{\ini+nk}{\ini+n+1} \right)^\alpha  \cdot \frac{\cbo}{(\ini+nk)^{\beta+1}} = \frac{  e^{\frac{\alpha}{\ini+n+1}}\cdot \cbo}{(\ini+n+1)^\alpha}\cdot \sum_{k=2}^K  (\ini+nk)^{\alpha-\beta-1} \,.
\end{align*}
Applying Proposition~\ref{pro:approx} with $f(x)= (\ini+nx)^{\alpha-\beta-1}$ to the above, since $\frac{1}{n(\alpha-\beta)}(\ini+nx)^{\alpha-\beta}$ is an anti-derivative of $f$, we obtain the following upper bounds:
\begin{align*}
    \frac{ e^{\frac{\alpha}{\ini+n+1}}\cdot \cbo}{(\ini+n+1)^\alpha}\cdot  \begin{cases}
   \frac{1}{n(\alpha-\beta)}\left( (\ini+nK)^{\alpha-\beta} -  (\ini+2n)^{\alpha-\beta}\right) + (\ini+nK)^{\alpha-\beta-1} , &\text{if }\alpha>\beta+1,\\  
   K-1, &\text{if }\alpha=\beta+1,\\ 
   \frac{1}{n(\alpha-\beta)}\left( (\ini+nK)^{\alpha-\beta} -  (\ini+2n)^{\alpha-\beta}\right) + (\ini+2n)^{\alpha-\beta-1}  &\text{if }\alpha<\beta+1.   \end{cases}  
\end{align*}
Combining all three cases, we conclude:
\begin{align*}
     \sum_{k=2}^K  \left(\prod_{j=2}^{k}a_j\right)^{-1} c_{k} &\leq \frac{e^{\frac{\alpha}{\ini+n+1}}\cdot \cbo}{(\ini+n+1)^\alpha}\cdot\left( \frac{(\ini+nK)^{\alpha-\beta}}{n(\alpha-\beta)}+ (\ini+nK)^{\alpha-\beta-1}\right)\,.
\end{align*}
Plugging this back to \eqref{int2}, and using \eqref{est1} to upper bound the product of $a_j$'s,  the proof is completed.

\section{Conclusion}

Motivated by some limitations in the previous efforts, this work establishes tight convergence rates for without-replacement SGD.
The key to obtaining tight rates is to depart from the constant step size in the previous works and adopt time-varying step sizes.
We first observe that known tools for obtaining convergence rates such as Chung's lemma  are not suitable for our case where there are two parameters of interests, namely
$n$ and $K$, in the convergence bound.
To overcome the issue, we develop a variant of Chung's lemma that establishes the convergence bound with correct dependency on the two parameters.

	\bibliographystyle{alpha}
	\bibliography{ref}

 \appendix

 \section{Details for  the quadratic cost} \label{appen:quad}

\noindent Now let us use again Lemma \ref{main:2} to obtain a tight convergence rate.
We follow notations in Section~\ref{sec:tight}.
Again, we use \eqref{bd:11} for the first recursive inequality \eqref{11} in Lemma~\ref{main:2}.

For the second recursive inequalities \eqref{kk} in Lemma~\ref{main:2}, in order to obtain better convergence rate,  we use the following improved per-epoch bound for quadratic costs due to  Rajput, Gupta, and Papailiopoulos~\cite{rajput2020closing}:
\begin{proposition}[{\cite[implicit in Appendix A]{rajput2020closing}}] \label{per:2}
Under the setting of Proposition~\ref{per:0}, assume further that  $F$ is  quadratic. 
Then for any step size for the $(k+1)$-th epoch $\ee_{k+1}\leq \frac{2}{L}$, the following bound holds between the output of the $(k+1)$-th and $k$-th epochs  $y_{k+1}$ and $y_k$:   
\begin{align}
\begin{split}
    \ex{}{\norm{y_{k+1}-\xs}^2}  &\leq \left( 1-  3n\ee_{k+1} \mm/2 +5n^2 \ee_{k+1}^2 L^2   + 8 n^3 \ee_{k
    +1}^3 \kappa L^3  \right) \norm{y_k-\xs}^2 \\
    &\quad+10 n^3 \ee_{k+1}^4 L^2 G^2+40n^4\ee_{k+1}^5\kappa L^3G^2+ 32n\ee_{k+1}^3 \kappa L G^2\,.
\end{split} \label{perbd:2}
\end{align}
where the expectation is taken over the randomness within the $(k+1)$-th epoch.
\end{proposition}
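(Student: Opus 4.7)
The plan is to follow the template of the per-epoch analysis in Proposition~\ref{per:1} from \cite{nagaraj2019sgd}, but to sharpen the drift and variance estimates by exploiting that $\G{\cdot}$ is affine (since $F$ is quadratic). Unrolling the $(k+1)$-th epoch gives
\begin{align*}
 y_{k+1}-\xs \;=\; (y_k-\xs) \;-\; \ee_{k+1}\sum_{i=1}^n \g_{\sigma_{k+1}(i)}(x_{k+1,i-1}),
\end{align*}
so after squaring and taking conditional expectation the task reduces to controlling the cross term with $y_k-\xs$ and the squared norm of the gradient sum. Using the permutation identity $\sum_i \g_{\sigma_{k+1}(i)}(y_k) = n\,\G{y_k}$, I would write the sum as $n\,\G{y_k} + D_{k+1}$ with $D_{k+1}:=\sum_i[\g_{\sigma_{k+1}(i)}(x_{k+1,i-1}) - \g_{\sigma_{k+1}(i)}(y_k)]$; the $n\,\G{y_k}$ piece combined with $\mu$-strong convexity of $F$ yields the dominant contraction $1-3n\ee_{k+1}\mu/2$ appearing in \eqref{perbd:2}.

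The quadratic hypothesis enters when dissecting $D_{k+1}$. Setting $H:=\nabla^2 F$ and $g_i := f_i - F$ (so that $\sum_i g_i \equiv 0$ and each $g_i$ is $2L$-smooth), the exact identity $\G{x}-\G{y_k}=H(x-y_k)$ gives the decomposition
\begin{align*}
 D_{k+1} \;=\; H\sum_{i=1}^n(x_{k+1,i-1}-y_k) \;+\; \sum_{i=1}^n\bigl[\nabla g_{\sigma_{k+1}(i)}(x_{k+1,i-1}) - \nabla g_{\sigma_{k+1}(i)}(y_k)\bigr].
\end{align*}
The first, linear-in-drift piece admits a recursive refinement, since each $x_{k+1,i-1}-y_k$ is itself $-\ee_{k+1}$ times a partial gradient sum; applying the same split once more produces the higher-order coefficients $5n^2\ee_{k+1}^2 L^2$ and $8n^3\ee_{k+1}^3\kappa L^3$ that sit inside the contraction factor of \eqref{perbd:2}. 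For the zero-mean residual piece I would invoke the coupling-to-with-replacement-SGD argument of \cite[Section A.1]{nagaraj2019sgd} together with the $G$-Lipschitz iterate-drift bound $\ex\norm{x_{k+1,i-1}-y_k}^2 = O(i^2\ee_{k+1}^2 G^2)$; this is precisely the key technical estimate reused in Appendix~A of \cite{rajput2020closing}.

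To assemble the inequality I would apply $\norm{a+b}^2 \le (1+c)\norm{a}^2 + (1+c^{-1})\norm{b}^2$ with $c$ chosen so that the $n\ee_{k+1}\mu$-scale contraction coming from $\inp{y_k-\xs}{n\,\G{y_k}}$ absorbs the secondary cross terms and leaves a net rate of $3n\ee_{k+1}\mu/2$. The three additive noise terms $10n^3\ee_{k+1}^4 L^2 G^2$, $40n^4\ee_{k+1}^5 \kappa L^3 G^2$, and $32 n\ee_{k+1}^3 \kappa L G^2$ then emerge from squaring the gradient-sum decomposition and applying the coupling second-moment bounds at the three resulting orders (drift-of-drift, Hessian crossed with coupled noise, and bare coupled noise), multiplied throughout by $\ee_{k+1}^2$.

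The main obstacle will be the careful bookkeeping of exact numerical constants: the specific values $3/2, 5, 8, 10, 40, 32$ depend delicately on how AM-GM weights are split between contraction and error, and matching them requires repeating the Nagaraj--Jain--Netrapalli computation verbatim but substituting the linear identity $\G{x}-\G{y_k}=H(x-y_k)$ at each point where the general-convex argument would invoke only an $L$-smoothness Taylor bound. This substitution is precisely what improves the noise scaling from $n^2\ee^3$ in \eqref{perbd:1} to $n^3\ee^4$ in \eqref{perbd:2}, ultimately explaining the additional $1/n$ factor that propagates through to \eqref{bd:master2}.
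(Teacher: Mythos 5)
The paper does not prove Proposition~\ref{per:2}: the result is attributed to \cite[Appendix A]{rajput2020closing} and stated without derivation, so there is no in-paper argument to compare against. Your sketch is a plausible reconstruction of the cited argument: unroll the epoch into $y_{k+1}-\xs = (y_k-\xs) - \ee_{k+1}\sum_i \g_{\sigma_{k+1}(i)}(x_{k+1,i-1})$, peel off the exact full gradient $n\G{y_k}$, exploit that $\G{\cdot}$ is affine to replace the smoothness estimate $\norm{\G{x}-\G{y_k}}\le L\norm{x-y_k}$ by the identity $\G{x}-\G{y_k}=H(x-y_k)$ (which allows the drift to be bootstrapped once more), and invoke the coupling second-moment bounds for the zero-mean residual. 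This is the right skeleton.

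Two caveats. First, your account of which term gains the $1/n$ factor is wrong: comparing \eqref{perbd:1} to \eqref{perbd:2}, the improvement is in the dominant noise term $20n^2\ee_{k+1}^3\kappa L G^2 \rightarrow 32 n\ee_{k+1}^3 \kappa L G^2$ (an $n$-fold reduction), whereas the $n^3\ee_{k+1}^4 L^2 G^2$ term already appears in both bounds; you claim the improvement is ``from $n^2\ee^3$ to $n^3\ee^4$,'' which misidentifies the mechanism. Second, the proposal stops at a roadmap and candidly defers the entire constant bookkeeping; but the constants are not cosmetic here --- the contraction factor in \eqref{perbd:2} carries the corrections $5n^2\ee_{k+1}^2 L^2$ and $8n^3\ee_{k+1}^3\kappa L^3$ that must provably be absorbed into the $3n\ee_{k+1}\mm/2$ rate for the downstream step-size analysis in Appendix~\ref{appen:quad} to go through, and your sketch gives no indication of how the factor $3/2$ specifically is preserved across the AM--GM splits. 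As written this is an outline rather than a proof, and it contains a substantive misstatement about the source of the $1/n$ gain.
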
 
\noindent For $k\geq 16 \alpha\kappa^2-1$, we have $n\ee_{k+1} \leq \frac{1}{8}\frac{\mm}{L^2}$.
Using this bound, it is straightforward to check that  \eqref{perbd:2} can be simplified into:
\begin{align}
     \ab_{k+1}  \leq \exp  \left( - n\ee_{k+1}\mm/2   \right) \ab_k^2+15 n^3 \ee_{k+1}^4 L^2 G^2   + 32n\ee_{k+1}^3 \kappa L G^2\,. \label{new:perbd}
\end{align}
For $k< 8\alpha \kappa^2-1$, recursively applying Proposition~\ref{per:0}  with the fact $(n\ee_{k+1})^{-1}\leq 4\kappa L +L/(2n)$ implies:
\begin{align}
    \ab_{k+1}&\leq \exp\left( - n\ee_{k+1}\mm/2 \right)\cdot \ab_k+ 3n^3\ee_{k+1}^4G^2 (4\kappa L +L/(2n))^2  + 4n\ee_{k+1}^3 \kappa L
    G^2\,.  \label{new:perbd2}
 \end{align}
 Therefore, combining \eqref{new:perbd} and \eqref{new:perbd2}, we obtain the following bound which holds for any $k\geq 1$:
 \begin{align}
    \ab_{k+1}&\leq \exp\left( - n\ee_{k+1}\mm/2 \right)\cdot\ab_k + b_2\cdot n^3\ee_{k+1}^4 +b_3\cdot n\ee_{k+1}^3 \,,  \label{new:combine}
 \end{align}
 where $b_2:= 15 L^2 G^2 +3G^2  (4\kappa L +L/(2n))^2$ and $b_3:= 32 \kappa LG^2 $, i.e., $b_2=O(\kappa^2)$ and $b_3= \OO{\kappa}$.
Following Section~\ref{sec:tight}, one can similarly modify the coefficient of $\ab_k$ in \eqref{new:combine} to obtain  
\begin{align}
     \ab_{k+1} &\leq  \exp\left(-\alpha \cdot\sum_{i=1}^n  \frac{1}{\ini+nk+i}+ \frac{\alpha }{k^2}\right)\cdot \ab_k + \frac{16 b_2 \alpha^4 n^3\mu^{-4}}{(\ini + n(k+1))^4} +\frac{8b_3 \alpha^3 n\mu^{-3}}{(\ini + n(k+1))^3} \label{new:kk} 
 \end{align}
 However, one can notice that \eqref{new:kk} is not quite of the form \eqref{kk}, and Lemma~\ref{main:2} is not directly applicable to this bound.
 In fact, we need to make some modifications in Lemma~\ref{main:2}.
First, for $\cbt>0$ and $\gamma>0$, there is an additional term to the recursive relations \eqref{kk}: for any $k\geq1$, the new recursive relations now read
\begin{align} \label{kk:new}
    \ab_{k+1} &\leq  \exp \left( -\alpha  \sum_{i=1}^{n } \frac{1}{\ini+nk+i} + \eps\cdot  \frac{1}{k^2}\right)  \ab_k +   \frac{\cbo}{(\ini+n(k+1))^{\beta+1}}+\frac{\cbt}{(\ini+n(k+1))^{\gamma+1}}\,. 
\end{align}
It turns out that for these additional terms in the recursive relations, one can use the same techniques to prove that  the global convergence bound \eqref{bound:2} has the following additional terms:
\begin{align}\label{bd:add}
       \frac{\frac{c}{\alpha-\beta}   e^{\frac{\alpha}{\ini+n+1}}\cdot \cbt }{n(\ini+nK)^\gamma} +  \frac{c  e^{\frac{\alpha}{\ini+n+1}}\cdot \cbt }{(\ini+nK)^{\gamma+1}}\,.
\end{align}
 Now using this modified version of Lemma~\ref{main:2},  one completes the proof.

	\end{document}